\documentclass[11pt]{article}
\usepackage[margin=1in]{geometry}
\usepackage[T1]{fontenc}
\usepackage[utf8]{inputenc}
\usepackage{lmodern}
\usepackage{microtype}
\usepackage{amsmath,amssymb,amsthm,mathtools}
\usepackage{enumitem}
\usepackage{bm}
\usepackage{xcolor}
\usepackage{hyperref}
\usepackage[nameinlink,capitalise,noabbrev]{cleveref}
\usepackage{aliascnt}

\hypersetup{
  colorlinks=true,
  linkcolor=blue!50!black,
  citecolor=blue!50!black,
  urlcolor=blue!50!black,
  pdftitle={Exact finite-horizon quantile Kelly for repeated multi-outcome events},
  pdfauthor={Christopher D. Long}
}

\newtheorem{theorem}{Theorem}[section]
\newaliascnt{proposition}{theorem}
\newtheorem{proposition}[proposition]{Proposition}
\aliascntresetthe{proposition}
\newaliascnt{corollary}{theorem}
\newtheorem{corollary}[corollary]{Corollary}
\aliascntresetthe{corollary}
\newaliascnt{lemma}{theorem}
\newtheorem{lemma}[lemma]{Lemma}
\aliascntresetthe{lemma}
\theoremstyle{definition}
\newaliascnt{definition}{theorem}
\newtheorem{definition}[definition]{Definition}
\aliascntresetthe{definition}
\newaliascnt{remark}{theorem}
\newtheorem{remark}[remark]{Remark}
\aliascntresetthe{remark}
\newaliascnt{example}{theorem}
\newtheorem{example}[example]{Example}
\aliascntresetthe{example}
\newaliascnt{conjecture}{theorem}
\newtheorem{conjecture}[conjecture]{Conjecture}
\aliascntresetthe{conjecture}

\crefname{theorem}{theorem}{theorems}
\Crefname{theorem}{Theorem}{Theorems}
\crefname{proposition}{proposition}{propositions}
\Crefname{proposition}{Proposition}{Propositions}
\crefname{corollary}{corollary}{corollaries}
\Crefname{corollary}{Corollary}{Corollaries}
\crefname{lemma}{lemma}{lemmas}
\Crefname{lemma}{Lemma}{Lemmas}
\crefname{definition}{definition}{definitions}
\Crefname{definition}{Definition}{Definitions}
\crefname{remark}{remark}{remarks}
\Crefname{remark}{Remark}{Remarks}
\crefname{example}{example}{examples}
\Crefname{example}{Example}{Examples}
\crefname{conjecture}{conjecture}{conjectures}
\Crefname{conjecture}{Conjecture}{Conjectures}

\newcommand{\R}{\mathbb{R}}
\newcommand{\N}{\mathbb{N}}
\newcommand{\E}{\mathbb{E}}
\newcommand{\Prob}{\mathbb{P}}

\newcommand{\supp}{\operatorname{supp}}
\newcommand{\Med}{\operatorname{Med}}
\newcommand{\Quant}{\operatorname{Q}}

\title{Exact Finite-Horizon Quantile Kelly for Repeated Multi-Outcome Events}
\author{Christopher D. Long\\Headlamp Software\\\texttt{galizur@gmail.com}}
\date{}

\begin{document}
\maketitle

\begin{abstract}
We formulate and prove an exact finite-horizon quantile theorem for repeated identical multi-outcome Kelly wagering in wealth-profile / Arrow--Debreu coordinates. For a fixed $m$-outcome event repeated independently over a horizon $n$, the terminal wealth induced by a one-period wealth profile $W$ is a monomial $W^N$ in the multinomial count vector $N$. We show that every fixed upper quantile of terminal wealth is a positively homogeneous piecewise-monomial function on the closed Arrow--Debreu wealth simplex, equivalently piecewise linear in log-wealth coordinates on the positive interior. The pieces are indexed by the chambers of the multinomial count arrangement, and on each chamber the quantile objective is exactly a one-period Kelly objective for a count-based \emph{shadow law} $k/n$. Consequently the finite-horizon quantile problem decomposes into finitely many shadow-Kelly subproblems.

We then refine the interior chamber picture to a finite stratification of the full closed simplex by support faces and arrangement faces, and we prove a weak exact recursive boundary algorithm. We also prove a natural first-order asymptotic collapse to ordinary Kelly, showing that the optimal scaled log-quantile converges to the ordinary Kelly value and that exact finite-horizon maximizers converge to the Kelly wealth profile. For illustration, we include worked binary and ternary examples in the main text and expanded versions in the appendices. We conclude with further remarks and conjectures concerning stronger pruning, higher-order finite-horizon corrections, and extensions to simultaneous wagers.
\end{abstract}

\section{Introduction}

The Kelly criterion, introduced in \cite{Kelly1956}, selects bets by maximizing expected log wealth. In finite-state favorable games, the asymptotic dominance of log-optimal strategies was placed on rigorous footing by Breiman \cite{Breiman1961}, and in more general sequential investment settings by Algoet and Cover \cite{AlgoetCover1988}. For the median of fortune, Ethier \cite{Ethier2004} proved an exact repeated binary finite-horizon result and established asymptotic median-optimality more broadly. Broader background on the capital-growth criterion, together with its gambling and investment applications, may be found in \cite{MacLeanThorpZiemba2011,Thorp2008}. The purpose of the present note is different: we work entirely in the wealth-profile / Arrow--Debreu language and show that for a fixed multi-outcome event repeated independently over a finite horizon, the exact finite-horizon quantile problem has a clean chamber decomposition into shadow-Kelly subproblems.

The key point is that, once one uses state-contingent wealth rather than raw stake fractions, repeated play becomes algebraic. If the one-period wealth profile is $W=(W_1,\dots,W_m)$ and the $m$-outcome event is repeated $n$ times, then terminal wealth is
\[
X_n(W)=\prod_{i=1}^m W_i^{N_i},
\]
where $N=(N_1,\dots,N_m)$ is the multinomial count vector. Thus every possible terminal wealth is indexed by a count vector $k\in\mathcal C_n$, and every fixed upper quantile is determined by the ordering of the finitely many monomials $W^k$. Passing to log-wealth coordinates $u=\log W$ shows that this ordering is controlled by the linear forms $k\cdot u$. The hyperplanes $(k-\ell)\cdot u=0$ therefore partition $u$-space into chambers on which the quantile objective is linear. The corresponding optimization problem on each chamber is precisely a one-period Kelly problem for the empirical shadow law $k/n$.

The resulting theorem is exact, finite-horizon, and entirely structural. It is not an asymptotic approximation and does not invoke a central limit theorem, a law of large numbers, or any large-deviation principle. Probability enters only through the multinomial masses attached to count vectors. In the framework language: finite-horizon quantile Kelly is \emph{piecewise shadow-Kelly}. After refining the chamber picture to all support faces, the same geometric machine also yields a weak exact recursive boundary algorithm on the full closed simplex. The local stratum objective also has a natural information-geometric reading: in suitable ratio coordinates it becomes a softmax log-likelihood in natural parameters, so the strict concavity used later is the familiar exponential-family concavity. We then show that the resulting exact finite-horizon quantile problem collapses, at first asymptotic order, to the ordinary one-period Kelly problem.

\paragraph{Position within the framework.}
The theorem is a direct deployment of the core wealth-profile / Arrow--Debreu viewpoint developed in \cite{Long2026Single,Long2026Simultaneous,Long2026Parlay}. The central ideas are these.
\begin{enumerate}[label=(\roman*), itemsep=2pt]
  \item A one-period wager is a \emph{wealth profile} $W$, not a stake vector.
  \item Feasibility is expressed by the Arrow--Debreu budget constraint $q\cdot W=1$.
  \item Once the active count vector is fixed, the resulting finite-horizon objective is a one-period Kelly objective for a count-based \emph{shadow law} $k/n$.
  \item Boundary behavior is organized by a finite stratification of the closed simplex into support faces and arrangement faces.
\end{enumerate}
The present note is self-contained and can be read independently, but the background framework language of implicit state positions, shadow-Kelly reduction, and eventwise factorization is developed in \cite{Long2026Single,Long2026Simultaneous,Long2026Parlay}.
For illustration, we include a short worked binary example and a short worked ternary example in the body of the paper, with more detailed companion analyses in the appendices.

\section{Framework, notation, and problem statement}

Fix an integer $m\ge 2$. Let
\[
p=(p_1,\dots,p_m)\in(0,1)^m,\qquad \sum_{i=1}^m p_i=1,
\]
be the true law of a single $m$-outcome event, and let
\[
q=(q_1,\dots,q_m)\in(0,\infty)^m
\]
be a strictly positive Arrow--Debreu state-price vector. We do not assume $\sum_i q_i=1$; the only essential assumption is strict positivity.

\begin{definition}[Closed and open Arrow--Debreu wealth simplices]
The \emph{closed} and \emph{open} one-period wealth simplices are
\[
\overline{\mathcal W}:=\{W\in[0,\infty)^m:\ q\cdot W=1\},
\qquad
\mathcal W^{++}:=\{W\in(0,\infty)^m:\ q\cdot W=1\}.
\]
A point $W\in\overline{\mathcal W}$ is a feasible one-period wealth profile. When $W\in\mathcal W^{++}$ we write $u=\log W\in\R^m$ coordinatewise.
\end{definition}

We now repeat the \emph{same} one-period event independently over a horizon $n\in\N$. Let
\[
N=(N_1,\dots,N_m)\sim \mathrm{Multinomial}(n,p),
\]
so that $N_i$ counts how often outcome $i$ occurs. The terminal wealth induced by the fixed one-period profile $W\in\overline{\mathcal W}$ is
\begin{equation}\label{eq:terminal-wealth}
X_n(W):=\prod_{i=1}^m W_i^{N_i},
\end{equation}
with the convention $0^0=1$.

The relevant finite set of count vectors is
\[
\mathcal C_n:=\Bigl\{k=(k_1,\dots,k_m)\in\mathbb N_0^m:\ \sum_{i=1}^m k_i=n\Bigr\}.
\]
For $k\in\mathcal C_n$, define
\[
\pi_k:=\Prob(N=k)=\frac{n!}{k_1!\cdots k_m!}\prod_{i=1}^m p_i^{k_i},
\qquad
W^k:=\prod_{i=1}^m W_i^{k_i}.
\]
Then $X_n(W)$ takes the values $W^k$, $k\in\mathcal C_n$, with probabilities $\pi_k$.

\begin{definition}[Upper quantile]
For a nonnegative random variable $Y$ and a level $\alpha\in(0,1)$, define the \emph{upper $\alpha$-quantile}
\[
\Quant_\alpha^+(Y):=\sup\{t\ge 0:\ \Prob(Y\ge t)\ge \alpha\}.
\]
The upper median is the special case $\Med^+(Y):=\Quant_{1/2}^+(Y)$.
\end{definition}

The finite-horizon optimization problem studied in this note is
\begin{equation}\label{eq:master-problem}
\sup_{W\in\overline{\mathcal W}} \Quant_\alpha^+\bigl(X_n(W)\bigr),
\end{equation}
with particular emphasis on the median case $\alpha=1/2$.

\section{The chamber theorem}

\subsection{The count arrangement}

The ordering of the monomials $W^k$ is easiest to express on the positive simplex in log-wealth coordinates. For $k,\ell\in\mathcal C_n$, define the central hyperplane
\[
H_{k,\ell}:=\{u\in\R^m:(k-\ell)\cdot u=0\}.
\]
The finite arrangement
\[
\mathcal H_n:=\{H_{k,\ell}:k,\ell\in\mathcal C_n,\ k\ne \ell\}
\]
partitions $\R^m$ into finitely many open polyhedral cones (chambers) and lower-dimensional faces.

\begin{definition}[Admissible chamber]
An open chamber $\Gamma$ of $\mathcal H_n$ is \emph{admissible} if
\[
\mathcal F_\Gamma:=\{W\in\mathcal W^{++}:\ \log W\in\Gamma\}
\]
is nonempty. We write $\overline{\mathcal F_\Gamma}$ for its closure in $\overline{\mathcal W}$.
\end{definition}

On an admissible chamber $\Gamma$, the quantities $k\cdot u$ are strictly ordered and therefore so are the monomials $W^k=e^{k\cdot u}$.

\begin{lemma}[Chamber ordering]\label{lem:chamber-ordering}
Fix an admissible chamber $\Gamma$. Then there exists a unique ordering
\[
k^{(1)}_\Gamma,\dots,k^{(L)}_\Gamma,
\qquad L:=|\mathcal C_n|,
\]
of the count vectors in $\mathcal C_n$ such that for every $W\in\mathcal F_\Gamma$,
\[
W^{k^{(1)}_\Gamma}>W^{k^{(2)}_\Gamma}>\cdots>W^{k^{(L)}_\Gamma}.
\]
Moreover, for every $W\in\overline{\mathcal F_\Gamma}$,
\[
W^{k^{(1)}_\Gamma}\ge W^{k^{(2)}_\Gamma}\ge\cdots\ge W^{k^{(L)}_\Gamma}.
\]
\end{lemma}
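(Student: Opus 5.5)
The plan is to work in log-coordinates on the chamber, where the ordering is the sign pattern of the linear forms $(k-\ell)\cdot u$, and then pass to the closure by continuity of the monomials.

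\emph{Step 1: constant signs on $\Gamma$.} Since $\Gamma$ is an open chamber of $\mathcal H_n$, it is a connected (indeed convex) open set disjoint from every hyperplane $H_{k,\ell}$ with $k\ne\ell$. For each such pair, the linear form $u\mapsto(k-\ell)\cdot u$ is continuous and never zero on $\Gamma$. By connectedness it therefore has a constant strict sign on $\Gamma$. This defines a relation $k\succ_\Gamma\ell$ if and only if $(k-\ell)\cdot u>0$ for some, equivalently every, $u\in\Gamma$.

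\emph{Step 2: $\succ_\Gamma$ is a strict total order.} Fix any $u_0\in\Gamma$. Admissibility is not needed here, but $\Gamma$ is nonempty. The relation $k\succ_\Gamma\ell$ is equivalent to $k\cdot u_0>\ell\cdot u_0$. It is therefore the pullback of the strict order on $\R$ under the map $k\mapsto k\cdot u_0$, and this map is injective on $\mathcal C_n$ because $u_0$ lies on no $H_{k,\ell}$. Hence $\succ_\Gamma$ is irreflexive, transitive and total on the finite set $\mathcal C_n$. Listing $\mathcal C_n$ in decreasing order gives $k^{(1)}_\Gamma,\dots,k^{(L)}_\Gamma$. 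For $W\in\mathcal F_\Gamma$ we have $u=\log W\in\Gamma$ and $W^k=e^{k\cdot u}$, so strict monotonicity of $\exp$ gives the strict chain of monomials.

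Uniqueness holds because $\mathcal F_\Gamma\neq\emptyset$ by admissibility. A strict chain at even one point $W$ determines the positions of all $L$ distinct values, so any two orderings with the stated property coincide.

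\emph{Step 3: the closure.} Take $W\in\overline{\mathcal F_\Gamma}$, the closure in $\overline{\mathcal W}$, and choose $W^{(j)}\in\mathcal F_\Gamma$ with $W^{(j)}\to W$. Each monomial $W\mapsto W^k$ is a polynomial, hence continuous on $[0,\infty)^m$, including points with zero coordinates under the convention $0^0=1$, since $W_i^{0}\equiv1$. Passing to the limit in the strict inequalities $(W^{(j)})^{k^{(r)}}>(W^{(j)})^{k^{(r+1)}}$ gives the weak inequalities at $W$.

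The only mildly delicate point is Step 3 at boundary points of the simplex, where $\log W$ is undefined. Working with the monomials themselves rather than with $u$ avoids this, so I expect no real obstacle.
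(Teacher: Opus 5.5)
Your proposal is correct and follows essentially the same route as the paper: the linear forms $(k-\ell)\cdot u$ have constant strict signs on the chamber, which fixes a strict ordering of the numbers $k\cdot u$ and hence of the monomials $W^k=e^{k\cdot u}$, and the weak inequalities on $\overline{\mathcal F_\Gamma}$ follow by approximating with points of $\mathcal F_\Gamma$ and using continuity of the monomials. You also spell out two details the paper leaves implicit, and both are right: nonemptiness of $\mathcal F_\Gamma$ (admissibility) is what makes the ordering unique, and the monomials remain continuous at boundary points under the convention $0^0=1$.
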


\begin{proof}
Strict ordering on $\mathcal F_\Gamma$ is immediate from the definition of a chamber: if $u=\log W\in\Gamma$, then for every distinct $k,\ell\in\mathcal C_n$ one has either $(k-\ell)\cdot u>0$ or $(k-\ell)\cdot u<0$, and the sign is constant on $\Gamma$. Hence the ordering of the finitely many numbers $k\cdot u$ is constant on $\Gamma$, which yields the unique list $k^{(1)}_\Gamma,\dots,k^{(L)}_\Gamma$.

Now let $W\in\overline{\mathcal F_\Gamma}$. There exists a sequence $W^{(r)}\in\mathcal F_\Gamma$ with $W^{(r)}\to W$ in $\overline{\mathcal W}$. For every $r$,
\[
\bigl(W^{(r)}\bigr)^{k^{(1)}_\Gamma}>\cdots>\bigl(W^{(r)}\bigr)^{k^{(L)}_\Gamma}.
\]
Passing to the limit and using continuity of $W\mapsto W^k$ on $\overline{\mathcal W}$ gives the weak inequalities on $\overline{\mathcal F_\Gamma}$.
\end{proof}

\begin{definition}[Quantile index on a chamber]
Fix $\alpha\in(0,1)$ and an admissible chamber $\Gamma$. Let $r_\alpha(\Gamma)$ be the smallest index such that
\[
\sum_{j=1}^{r_\alpha(\Gamma)} \pi_{k^{(j)}_\Gamma}\ge \alpha.
\]
Define the corresponding \emph{quantile count vector}
\[
k_{\alpha,\Gamma}:=k^{(r_\alpha(\Gamma))}_\Gamma.
\]
\end{definition}

We can now state the exact chamber theorem.

\begin{theorem}[Exact chamber quantile theorem]\label{thm:chamber-quantile}
Fix $\alpha\in(0,1)$.
\begin{enumerate}[label=(\alph*), itemsep=4pt]
  \item The closed simplex $\overline{\mathcal W}$ is covered by the finitely many closures of admissible chambers:
  \[
  \overline{\mathcal W}=\bigcup_{\Gamma}\overline{\mathcal F_\Gamma}.
  \]
  \item For every admissible chamber $\Gamma$ and every $W\in\overline{\mathcal F_\Gamma}$,
  \begin{equation}\label{eq:chamber-quantile}
  \Quant_\alpha^+\bigl(X_n(W)\bigr)=W^{k_{\alpha,\Gamma}}.
  \end{equation}
  In particular, on $\mathcal F_\Gamma$ one has
  \[
  \log \Quant_\alpha^+\bigl(X_n(W)\bigr)=k_{\alpha,\Gamma}\cdot \log W.
  \]
  \item Consequently,
  \begin{equation}\label{eq:finite-decomp}
  \sup_{W\in\overline{\mathcal W}} \Quant_\alpha^+\bigl(X_n(W)\bigr)
  =
  \max_{\Gamma}\ \sup_{W\in\overline{\mathcal F_\Gamma}} W^{k_{\alpha,\Gamma}}.
  \end{equation}
\end{enumerate}
\end{theorem}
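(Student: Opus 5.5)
We prove the three parts in order. Part (a) carries the real content, (b) is a direct computation with the upper quantile, and (c) follows formally.

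\textbf{Part (a).} First we cover the positive simplex. The arrangement $\mathcal H_n$ is finite, so the union of its hyperplanes is closed and nowhere dense in $\R^m$. The difficulty is that we must also control its intersection with the image $\log\mathcal W^{++}$. This image is a smooth hypersurface, the graph-like set $\{u:\sum_i q_ie^{u_i}=1\}$, so a priori it could lie inside some $H_{k,\ell}$.

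We rule this out. Each $H_{k,\ell}$ is a central hyperplane with normal $k-\ell\neq 0$, and $\sum_i(k_i-\ell_i)=0$. Hence $u=t\mathbf 1$ lies in every $H_{k,\ell}$. On the hypersurface, however, the vector $(q_ie^{u_i})_i$ is normal. If the hypersurface contained an open piece of $H_{k,\ell}$, this normal would be constant in direction, and the strict convexity of $u\mapsto\sum_i q_ie^{u_i}$ makes that impossible.

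It follows that $\log\mathcal W^{++}\cap\bigcup H_{k,\ell}$ has empty interior in the hypersurface, since it is a finite union of proper analytic subsets. Therefore every $W\in\mathcal W^{++}$ is a limit of points $W^{(r)}$ whose logarithms avoid all hyperplanes. There are finitely many chambers, so some fixed chamber $\Gamma$ contains $\log W^{(r)}$ along a subsequence. That chamber is admissible and $W\in\overline{\mathcal F_\Gamma}$.

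Next we cover the boundary. A boundary point $W\in\overline{\mathcal W}\setminus\mathcal W^{++}$ is a limit of interior points, since $\mathcal W^{++}$ is dense in $\overline{\mathcal W}$ (take convex combinations with an interior point). Applying the same subsequence argument to such a sequence gives $W\in\overline{\mathcal F_\Gamma}$ for some admissible $\Gamma$.

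\textbf{Part (b).} Fix $W\in\overline{\mathcal F_\Gamma}$. By \cref{lem:chamber-ordering}, the values satisfy $v_j:=W^{k^{(j)}_\Gamma}$, where $v_1\ge\cdots\ge v_L$, and $X_n(W)=v_j$ with probability $\pi_{k^{(j)}}$. Here we sum the probabilities over coincident values when ties occur at the boundary; it is harmless to treat the list as a weakly ordered list, since the event $\{X_n\ge t\}$ is a union of initial segments. Write $r=r_\alpha(\Gamma)$.

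For $t\le v_r$ we have $\{X_n\ge t\}\supseteq\{N\in\{k^{(1)},\dots,k^{(r)}\}\}$, so $\Prob(X_n\ge t)\ge\sum_{j\le r}\pi_{k^{(j)}}\ge\alpha$. Hence $\Quant^+_\alpha(X_n(W))\ge v_r$.

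For $t>v_r$, the event $\{X_n\ge t\}$ is contained in $\{N=k^{(j)}\text{ for some }j<r\text{ with }v_j\ge t\}$. This uses the weak monotonicity: any index $j\ge r$ has $v_j\le v_r<t$. Its probability is therefore at most $\sum_{j<r}\pi_{k^{(j)}}<\alpha$, by minimality of $r$. Hence the supremum equals $v_r=W^{k_{\alpha,\Gamma}}$.

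One point needs care. When ties $v_{r-1}=v_r$ occur on the boundary, the containment above still holds, because it only requires $v_j\ge t>v_r$. This tie handling is the one place where care is needed in (b). On $\mathcal F_\Gamma$ we then take logarithms to get the linear form $k_{\alpha,\Gamma}\cdot\log W$.

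\textbf{Part (c).} This is immediate: we split the supremum over the finite cover from (a) and substitute the formula from (b) on each piece.

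The main obstacle is the nondegeneracy step in (a), namely that the curved image $\log\mathcal W^{++}$ is not contained in the arrangement. The convexity and analyticity argument above is the tool we will use for it.
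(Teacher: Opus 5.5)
Your proof is correct and follows the paper's own argument: the pigeonhole over finitely many chambers for (a), the two-sided tail computation using the weak ordering of \cref{lem:chamber-ordering} for (b), with boundary ties handled exactly as the paper does, and (c) as a formal consequence. Your added step in (a) is a genuine improvement: you show that the walls $H_{k,\ell}$ meet $\log\mathcal W^{++}$ in a closed nowhere dense set, so the approximating points can be chosen with logarithms inside open chambers, whereas the paper's proof takes this for granted when it picks an arbitrary sequence in $\mathcal W^{++}$ (a constant sequence at a point on a wall would defeat it); the step can be shortened by noting that the normal $(q_ie^{u_i})_i$ is entrywise positive while $k-\ell$ has entries of both signs, or by passing to the ratio coordinates of \cref{lem:ratio-linear}, in which each wall is an honest linear hyperplane in $\R^{m-1}$.
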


\begin{proof}
(a) Let $W\in\overline{\mathcal W}$. Choose a sequence $W^{(r)}\in\mathcal W^{++}$ with $W^{(r)}\to W$. Since there are only finitely many admissible chambers, some chamber $\Gamma$ contains infinitely many of the points $\log W^{(r)}$. Passing to that subsequence gives $W\in\overline{\mathcal F_\Gamma}$. Hence the closures cover $\overline{\mathcal W}$.

(b) Fix an admissible chamber $\Gamma$ and $W\in\overline{\mathcal F_\Gamma}$. By \cref{lem:chamber-ordering},
\[
W^{k^{(1)}_\Gamma}\ge W^{k^{(2)}_\Gamma}\ge\cdots\ge W^{k^{(L)}_\Gamma}.
\]
Write $r=r_\alpha(\Gamma)$. Then by definition,
\[
\sum_{j=1}^{r}\pi_{k^{(j)}_\Gamma}\ge \alpha,
\qquad
\sum_{j=1}^{r-1}\pi_{k^{(j)}_\Gamma}<\alpha.
\]
Since $X_n(W)$ takes the value $W^{k^{(j)}_\Gamma}$ with probability $\pi_{k^{(j)}_\Gamma}$,
\[
\Prob\!\left(X_n(W)\ge W^{k^{(r)}_\Gamma}\right)
\ge
\sum_{j=1}^{r}\pi_{k^{(j)}_\Gamma}
\ge \alpha.
\]
On the other hand, if $t>W^{k^{(r)}_\Gamma}$, then only those indices $j$ with $W^{k^{(j)}_\Gamma}>W^{k^{(r)}_\Gamma}$ can contribute to the event $\{X_n(W)\ge t\}$. By the weak ordering above, every such index satisfies $j\le r-1$. Hence
\[
\Prob\bigl(X_n(W)\ge t\bigr)
\le
\sum_{j=1}^{r-1}\pi_{k^{(j)}_\Gamma}
<\alpha.
\]
Therefore \eqref{eq:chamber-quantile} holds.

(c) This follows immediately from (a) and (b).
\end{proof}

\begin{corollary}[Median case]\label{cor:median-case}
For $\alpha=1/2$, the upper median of terminal wealth is a piecewise monomial function on $\overline{\mathcal W}$ and a piecewise linear function of $u=\log W$ on $\mathcal W^{++}$.
\end{corollary}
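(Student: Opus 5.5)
The corollary is the case $\alpha=1/2$ of \cref{thm:chamber-quantile}, so the plan is to specialize that theorem and then make precise what ``piecewise'' means. First, recall that $\Med^+=\Quant_{1/2}^+$ by definition. \Cref{thm:chamber-quantile}(a) then gives the finite cover $\overline{\mathcal W}=\bigcup_\Gamma\overline{\mathcal F_\Gamma}$ by closures of admissible chambers. By \cref{thm:chamber-quantile}(b), on each closed piece $\overline{\mathcal F_\Gamma}$ the upper median equals the single monomial $W^{k_{1/2,\Gamma}}$.

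To make the statement well-posed, I will check that the pieces are compatible on overlaps. If $W$ lies in $\overline{\mathcal F_\Gamma}\cap\overline{\mathcal F_{\Gamma'}}$, then part (b) applied to both chambers gives
\[
W^{k_{1/2,\Gamma}}=\Med^+(X_n(W))=W^{k_{1/2,\Gamma'}}.
\]
So the two formulas agree wherever the pieces meet. No separate argument is needed here, since both expressions equal the intrinsically defined quantity $\Med^+(X_n(W))$. Hence the upper median is a single well-defined function on $\overline{\mathcal W}$ that coincides with a monomial on each of finitely many closed pieces.

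For the log-linear statement, restrict to $\mathcal W^{++}$ and set $u=\log W$. The sets $\mathcal F_\Gamma$ are the preimages of the open chambers. Together with the lower-dimensional faces of the arrangement $\mathcal H_n$, intersected with the hyperplane-image of the budget constraint, they partition the domain. On each $\mathcal F_\Gamma$ we have $\log\Med^+(X_n(W))=k_{1/2,\Gamma}\cdot u$, which is linear in $u$. A point of $\mathcal W^{++}$ that lies on a face belongs to some closure $\overline{\mathcal F_\Gamma}$ by part (a). There the same closed-piece formula holds by part (b), and taking logarithms is legitimate because $W$ is strictly positive.

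Therefore the function is continuous and piecewise linear in $u$ with respect to the finite polyhedral subdivision induced by $\mathcal H_n$. The only point needing care is this treatment of faces, and part (b) on closures settles it. There is no real obstacle in this argument.
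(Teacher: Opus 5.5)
Your proposal is correct and is essentially the paper's own argument: the corollary is \cref{thm:chamber-quantile}(a)--(b) specialized to $\alpha=1/2$, and the ``in particular'' clause of part (b) supplies linearity in $u=\log W$ on each $\mathcal F_\Gamma$. The paper does not assert continuity. Your continuity claim is nonetheless true, by the pasting lemma applied to the finitely many closed pieces $\overline{\mathcal F_\Gamma}$ on which, as you showed, the monomial formulas agree; you should cite that step explicitly rather than just writing ``therefore''.
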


\begin{remark}[What the chamber theorem says]
The theorem is exact and finite-horizon. It says that the entire finite-horizon quantile problem is controlled by a finite hyperplane arrangement in log-wealth space. The count vector $k_{\alpha,\Gamma}$ plays the role of an \emph{active shadow law} on the chamber $\Gamma$. In the median case, finite-horizon median Kelly is therefore piecewise shadow-Kelly.
\end{remark}

\section{Shadow-Kelly reduction}

The chamber theorem reduces the global quantile problem to finitely many subproblems of the form
\[
\sup_{W\in\overline{\mathcal F_\Gamma}} W^k.
\]
We now identify the unconstrained optimizer of $W^k$ on the full Arrow--Debreu simplex.

\begin{proposition}[Shadow-Kelly maximizer of a count monomial]\label{prop:monomial-max}
Fix $k\in\mathcal C_n$ and define the empirical law
\[
\widehat p^{(k)}:=\frac{k}{n}\in\Delta^{m-1}.
\]
Then the unique maximizer of $W^k$ over $\overline{\mathcal W}$ is
\begin{equation}\label{eq:shadow-kelly-candidate}
W^{(k)}_i=\frac{k_i}{nq_i}=\frac{\widehat p^{(k)}_i}{q_i},
\qquad i=1,\dots,m,
\end{equation}
with the convention $W^{(k)}_i=0$ when $k_i=0$. Its optimal value is
\begin{equation}\label{eq:monomial-opt-value}
\max_{W\in\overline{\mathcal W}} W^k
=
\prod_{i=1}^m\left(\frac{k_i}{nq_i}\right)^{k_i}.
\end{equation}
Equivalently, $W^{(k)}$ is the one-period Kelly optimizer for the shadow law $\widehat p^{(k)}$.
\end{proposition}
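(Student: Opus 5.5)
The plan is to reduce to the support $S:=\{i:k_i>0\}$ and apply the weighted AM--GM inequality, which gives existence, uniqueness and the optimal value in one step.

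First, since $W^k=\prod_{i\in S}W_i^{k_i}$ (by the convention $0^0=1$), the objective does not depend on $W_i$ for $i\notin S$. Any budget spent on such coordinates is wasted. Concretely, if $W\in\overline{\mathcal W}$ has $W_j>0$ for some $j\notin S$, then moving that mass into the coordinates of $S$ (rescaling $W_S$ up by the factor $1/(1-q_jW_j)$) multiplies $W^k$ by a factor greater than one whenever $W^k>0$. Also $W^k=0$ whenever some $W_i=0$ with $i\in S$, while positive values are attainable. So any maximizer has $W_i=0$ for $i\notin S$ and $W_i>0$ for $i\in S$, with $\sum_{i\in S}q_iW_i=1$.

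Second, I apply weighted AM--GM with weights $k_i/n$ to the positive numbers $x_i:=q_iW_i\,n/k_i$, $i\in S$:
\[
\prod_{i\in S}\Bigl(\frac{nq_iW_i}{k_i}\Bigr)^{k_i/n}\le \sum_{i\in S}\frac{k_i}{n}\cdot\frac{nq_iW_i}{k_i}=\sum_{i\in S}q_iW_i\le 1.
\]
Raising both sides to the power $n$ gives $W^k\le\prod_{i\in S}(k_i/(nq_i))^{k_i}$ for every $W\in\overline{\mathcal W}$; this holds trivially when $W^k=0$. Equality in AM--GM holds iff all $x_i$ are equal. Equality in the last step holds iff $q\cdot W$ is concentrated on $S$, i.e. $W_i=0$ off $S$. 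Together with the budget constraint, these force $x_i=1$, i.e. $W_i=k_i/(nq_i)$ on $S$ and $W_i=0$ off $S$. This is exactly \eqref{eq:shadow-kelly-candidate}, and the bound is \eqref{eq:monomial-opt-value} (the factors with $k_i=0$ contribute $1$). The same equality analysis yields uniqueness directly, so the first step serves only as intuition.

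Finally, $W^{(k)}$ is feasible, since $q\cdot W^{(k)}=\sum_i k_i/n=1$. For the Kelly interpretation, note that $\tfrac1n\log W^k=\sum_i\widehat p^{(k)}_i\log W_i$, which is the one-period expected log-wealth under $\widehat p^{(k)}$; hence maximizing $W^k$ over $\overline{\mathcal W}$ is the same as the Kelly problem for the shadow law $\widehat p^{(k)}$. The only delicate point is handling the boundary coordinates $k_i=0$ with the $0^0$ convention and making the equality case airtight there. That is why I split off $S$ explicitly rather than using Lagrange multipliers, which would only treat interior critical points.
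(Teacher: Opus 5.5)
Your proof is correct and follows the paper's argument: restrict to $S=\{i:k_i>0\}$, apply weighted AM--GM with weights $k_i/n$ to the numbers $nq_iW_i/k_i$, raise to the $n$th power, and get uniqueness from the equality conditions together with $q\cdot W=1$. Your explicit handling of the case $W^k=0$ is slightly more careful than the paper, which calls all $nq_iW_i/k_i$ positive; the preliminary mass-shifting step is unnecessary, as you say, and its factor $1/(1-q_jW_j)$ restores the budget exactly only when $j$ is the sole off-support coordinate with positive mass.
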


\begin{proof}
Let
\[
S:=\{i: k_i>0\}.
\]
For every $W\in\overline{\mathcal W}$,
\[
\sum_{i\in S} q_iW_i \le \sum_{i=1}^m q_iW_i=1.
\]
Apply the weighted arithmetic--geometric mean inequality with weights $k_i/n$ on the positive numbers $nq_iW_i/k_i$, $i\in S$:
\[
\prod_{i\in S}\left(\frac{nq_iW_i}{k_i}\right)^{k_i/n}
\le
\sum_{i\in S}\frac{k_i}{n}\,\frac{nq_iW_i}{k_i}
=
\sum_{i\in S} q_iW_i
\le 1.
\]
Raise both sides to the $n$th power to obtain
\[
\prod_{i\in S}\left(\frac{nq_iW_i}{k_i}\right)^{k_i}\le 1.
\]
Rearranging gives
\[
W^k=\prod_{i=1}^m W_i^{k_i}
\le
\prod_{i\in S}\left(\frac{k_i}{nq_i}\right)^{k_i},
\]
which is exactly \eqref{eq:monomial-opt-value}.

Equality holds in weighted AM--GM iff all the quantities $nq_iW_i/k_i$ ($i\in S$) are equal, and equality in $\sum_{i\in S}q_iW_i\le 1$ forces $W_i=0$ for $i\notin S$. Since $\sum_i q_iW_i=1$, the common value must be $1$, so necessarily
\[
q_iW_i=\frac{k_i}{n}\quad (i\in S),
\qquad
W_i=0\quad (i\notin S).
\]
This is exactly \eqref{eq:shadow-kelly-candidate}. Uniqueness follows from the equality condition.
\end{proof}

\begin{remark}[Why this is shadow Kelly]
The monomial objective can be written as
\[
W^k=\exp\!\left( n\sum_{i=1}^m \widehat p^{(k)}_i \log W_i\right).
\]
Thus maximizing $W^k$ is equivalent to maximizing expected one-period log wealth under the shadow empirical law $\widehat p^{(k)}=k/n$. In the language of the framework, \cref{prop:monomial-max} says: once the active count vector is fixed, the finite-horizon problem becomes an ordinary one-period Kelly problem for a shadow law.
\end{remark}

Combining \cref{thm:chamber-quantile,prop:monomial-max} yields the main structural theorem.

\begin{theorem}[Exact finite-horizon quantile Kelly decomposition]\label{thm:main-structural}
Fix $\alpha\in(0,1)$. Then the finite-horizon quantile problem \eqref{eq:master-problem} decomposes into finitely many shadow-Kelly subproblems:
\[
\sup_{W\in\overline{\mathcal W}} \Quant_\alpha^+\bigl(X_n(W)\bigr)
=
\max_{\Gamma}\ \sup_{W\in\overline{\mathcal F_\Gamma}} W^{k_{\alpha,\Gamma}}.
\]
For each admissible chamber $\Gamma$:
\begin{enumerate}[label=(\roman*), itemsep=3pt]
  \item if the shadow-Kelly point $W^{(k_{\alpha,\Gamma})}$ belongs to $\overline{\mathcal F_\Gamma}$, then it is the exact optimizer of the chamber subproblem;
  \item otherwise the chamber optimum lies on the boundary $\partial\overline{\mathcal F_\Gamma}$.
\end{enumerate}
In particular, finite-horizon quantile Kelly is \emph{piecewise shadow-Kelly}.
\end{theorem}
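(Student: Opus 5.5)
The decomposition identity is exactly \eqref{eq:finite-decomp} of \cref{thm:chamber-quantile}(c), so nothing new is needed there. I will simply invoke parts (a) and (b): the closures $\overline{\mathcal F_\Gamma}$ cover $\overline{\mathcal W}$, and on each closure the quantile equals the monomial $W^{k_{\alpha,\Gamma}}$. Hence the supremum over $\overline{\mathcal W}$ is the maximum over the finitely many admissible chambers of the chamber suprema. I will also note that each chamber supremum is attained. The set $\overline{\mathcal F_\Gamma}$ is a closed subset of the compact simplex $\overline{\mathcal W}$, which is compact since $q$ is strictly positive, and $W\mapsto W^k$ is continuous there. So the outer max is a max of finitely many attained values.

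For (i), I argue by restriction. \Cref{prop:monomial-max} shows that $W^{(k)}$ with $k=k_{\alpha,\Gamma}$ is the unique maximizer of $W^k$ over all of $\overline{\mathcal W}$. If it lies in the subset $\overline{\mathcal F_\Gamma}$, it is a fortiori the maximizer over that subset. Uniqueness is inherited as well, so it is the exact optimizer of the chamber subproblem.

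For (ii), suppose $W^{(k)}\notin\overline{\mathcal F_\Gamma}$, and let $W^\ast$ be a maximizer over $\overline{\mathcal F_\Gamma}$ (which exists by compactness). I must show $W^\ast$ is not in the relative interior of $\overline{\mathcal F_\Gamma}$ within the affine hyperplane $\{q\cdot W=1\}$. The plan is a concavity argument in the style of a line search. Set $S=\{i:k_i>0\}$; the value $W^{(k)}$ is positive, so $(W^\ast)^k>0$ unless the chamber maximum is $0$. Along the segment $W_t=(1-t)W^\ast+tW^{(k)}$, which stays in $\overline{\mathcal W}$ by convexity, the function $\log W_t^k=\sum_{i\in S}k_i\log (W_t)_i$ is concave in $t$. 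At $t=1$ it is strictly larger than at $t=0$, by uniqueness in \cref{prop:monomial-max}. Concavity then makes it strictly larger than the $t=0$ value for every small $t>0$. If $W^\ast$ were a relative interior point of $\overline{\mathcal F_\Gamma}$, then $W_t\in\overline{\mathcal F_\Gamma}$ for small $t$, which contradicts maximality. Hence $W^\ast\in\partial\overline{\mathcal F_\Gamma}$.

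The main obstacle is the degenerate case $(W^\ast)^k=0$, where the chamber maximum is zero. Then every point of $\overline{\mathcal F_\Gamma}$ is a maximizer. This case cannot occur, because $\mathcal F_\Gamma$ is nonempty and consists of strictly positive $W$, so the maximum is positive; I will note this explicitly. A second subtlety is that $\overline{\mathcal F_\Gamma}$ need not be convex in $W$-coordinates, since it is the image of a convex cone under $\exp$. The argument above only uses a neighborhood of an interior point, so convexity of the chamber is never needed. The final sentence of the theorem, that quantile Kelly is piecewise shadow-Kelly, is then just a restatement of (i)--(ii) together with the remark following \cref{prop:monomial-max}.
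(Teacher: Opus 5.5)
Your proposal is correct and follows the paper's proof: the identity is \cref{thm:chamber-quantile}(c), (i) is restriction of the unique global maximizer from \cref{prop:monomial-max}, and (ii) rests on the observation that a maximizer in the relative interior of $\overline{\mathcal F_\Gamma}$ would also be a global maximizer. Your segment argument via concavity of $t\mapsto \log W_t^{k}$ makes explicit the local-implies-global step that the paper leaves implicit, and so does your remark that the chamber maximum is positive because $\mathcal F_\Gamma\subseteq\mathcal W^{++}$ is nonempty; the argument also correctly avoids any need for convexity of $\overline{\mathcal F_\Gamma}$.
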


\begin{proof}
The decomposition is \cref{thm:chamber-quantile}(c). For a fixed chamber $\Gamma$, the chamber objective is $W\mapsto W^{k_{\alpha,\Gamma}}$. By \cref{prop:monomial-max}, its unique global maximizer over all of $\overline{\mathcal W}$ is $W^{(k_{\alpha,\Gamma})}$. If this point lies in the feasible chamber closure $\overline{\mathcal F_\Gamma}$, then it solves the chamber subproblem. If not, then the maximizer of the continuous function $W\mapsto W^{k_{\alpha,\Gamma}}$ over the compact set $\overline{\mathcal F_\Gamma}$ cannot lie in its relative interior (otherwise it would also be the global maximizer), so it lies on the boundary.
\end{proof}

\begin{remark}[Boundary refinement]
\Cref{thm:main-structural} gives the correct exact structural stopping point at the level of chambers. \Cref{sec:support-strata,sec:weak-recursion} refine this boundary picture: the closed simplex is stratified by support faces and arrangement faces, and the resulting finite boundary descent becomes an exact, though not yet efficiently pruned, recursive solver.
\end{remark}

\begin{proposition}[Restriction to incomplete one-period wealth families]\label{prop:restricted-family}
Let $\mathcal W_{\mathrm{feas}}\subseteq\overline{\mathcal W}$ be any nonempty closed feasible family. Then
\[
\sup_{W\in\mathcal W_{\mathrm{feas}}} \Quant_\alpha^+\bigl(X_n(W)\bigr)
=
\max_{\Gamma}\ \sup_{W\in\mathcal W_{\mathrm{feas}}\cap\overline{\mathcal F_\Gamma}} W^{k_{\alpha,\Gamma}}.
\]
Thus the exact chamber theorem survives unchanged under restriction to an arbitrary closed one-period wealth family.
\end{proposition}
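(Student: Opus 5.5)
The plan is to deduce \cref{prop:restricted-family} directly from \cref{thm:chamber-quantile}(a),(b), without any new geometry. The only point to check is that the chamber covering and the chamberwise monomial formula are pointwise statements on $\overline{\mathcal W}$. They therefore pass unchanged to any subset.

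First, intersect the covering of \cref{thm:chamber-quantile}(a) with $\mathcal W_{\mathrm{feas}}$ to get
\[
\mathcal W_{\mathrm{feas}}=\bigcup_{\Gamma}\bigl(\mathcal W_{\mathrm{feas}}\cap\overline{\mathcal F_\Gamma}\bigr),
\]
a finite union since there are finitely many admissible chambers. Second, \cref{thm:chamber-quantile}(b) holds for every $W\in\overline{\mathcal F_\Gamma}$, hence in particular for every $W\in\mathcal W_{\mathrm{feas}}\cap\overline{\mathcal F_\Gamma}$. On each piece this gives $\Quant_\alpha^+(X_n(W))=W^{k_{\alpha,\Gamma}}$. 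Third, the supremum of a function over a finite union equals the maximum over the pieces of the suprema over each piece, which gives
\[
\sup_{W\in\mathcal W_{\mathrm{feas}}}\Quant_\alpha^+\bigl(X_n(W)\bigr)=\max_{\Gamma}\ \sup_{W\in\mathcal W_{\mathrm{feas}}\cap\overline{\mathcal F_\Gamma}}\Quant_\alpha^+\bigl(X_n(W)\bigr)=\max_{\Gamma}\ \sup_{W\in\mathcal W_{\mathrm{feas}}\cap\overline{\mathcal F_\Gamma}} W^{k_{\alpha,\Gamma}}.
\]

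The main bookkeeping obstacle is that some intersections $\mathcal W_{\mathrm{feas}}\cap\overline{\mathcal F_\Gamma}$ may be empty. I would adopt the convention $\sup\emptyset=-\infty$, or equivalently restrict the maximum to the chambers whose intersection is nonempty. At least one such chamber exists because $\mathcal W_{\mathrm{feas}}\neq\emptyset$ and the closures cover it.

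Closedness of $\mathcal W_{\mathrm{feas}}$ is not needed for the identity itself. It gives more: each nonempty piece $\mathcal W_{\mathrm{feas}}\cap\overline{\mathcal F_\Gamma}$ is compact, being a closed subset of the compact simplex $\overline{\mathcal W}$ (compact because $q$ is strictly positive). Since $W\mapsto W^{k}$ is continuous on $\overline{\mathcal W}$, each inner supremum is attained, and so is the outer one, as a maximum over finitely many attained values. I would note this attainment to match the phrase ``survives unchanged'', so that the boundary dichotomy of \cref{thm:main-structural} also applies to the restricted pieces.
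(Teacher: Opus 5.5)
Your proposal is correct and follows the same route as the paper, whose proof simply reruns \cref{thm:chamber-quantile}(c) on $\mathcal W_{\mathrm{feas}}$: intersect the chamber covering of part (a) with the feasible family, then apply the pointwise formula of part (b) on each piece. Your added points are accurate refinements: the $\sup\emptyset=-\infty$ convention for empty pieces, and the observation that closedness is needed only for attainment, not for the identity itself.
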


\begin{proof}
The proof is identical to the proof of \cref{thm:chamber-quantile}(c), now restricted to $\mathcal W_{\mathrm{feas}}$.
\end{proof}

\begin{remark}[Why \cref{prop:restricted-family} matters]
This proposition is the exact bridge from the complete-market theorem to incomplete one-period menus. In particular, if one-period wealth profiles arise from a smaller family of available wagers or structured simultaneous bets, then the finite-horizon quantile functional is simply the restriction of the same piecewise-monomial function to the feasible wealth family. When the feasible family is cut out by a one-period risk constraint, each chamber or stratum subproblem becomes a risk-constrained shadow-Kelly problem of the same type studied in \cite{Long2026Risk}; thus the present finite-horizon theorem composes directly with that one-period theory.
\end{remark}

\section{Binary specialization and exact repeated two-outcome Kelly}

The repeated binary case is recovered transparently from the chamber theorem.

\begin{corollary}[Repeated exact binary quantile Kelly]\label{cor:binary}
Assume $m=2$, and write a one-period wealth profile as $W=(U,D)\in\overline{\mathcal W}$, where $q_1U+q_2D=1$. Let $B_n\sim\mathrm{Binomial}(n,p_1)$ be the number of occurrences of outcome $1$.
\begin{enumerate}[label=(\alph*), itemsep=3pt]
  \item On the chamber $U>D$, terminal wealth is increasing in $B_n$, and
  \[
  \Quant_\alpha^+\bigl(X_n(U,D)\bigr)=U^{m_\alpha}(D)^{n-m_\alpha},
  \]
  where $m_\alpha$ is the largest integer $j$ with
  \[
  \Prob(B_n\ge j)\ge \alpha.
  \]
  \item On that chamber, the unconstrained chamber optimizer is the shadow-Kelly point
  \[
  U^*=\frac{m_\alpha}{nq_1},
  \qquad
  D^*=\frac{n-m_\alpha}{nq_2}.
  \]
  \item For $\alpha=1/2$, this recovers the exact repeated binary upper-median theorem: finite-horizon exact median Kelly is the one-period Kelly optimizer for the shadow probability $m_{1/2}/n$.
\end{enumerate}
\end{corollary}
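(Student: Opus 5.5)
We specialize \cref{thm:chamber-quantile} and \cref{prop:monomial-max} to $m=2$. In log coordinates $u=(\log U,\log D)$, every count vector has the form $k=(j,n-j)$ with $0\le j\le n$. For two such vectors,
\[
(k-\ell)\cdot u=(j-j')(\log U-\log D).
\]
So every hyperplane of $\mathcal H_n$ is the single line $\log U=\log D$. The arrangement therefore has exactly two open chambers, $\{U>D\}$ and $\{U<D\}$, and both are admissible because $q$ is strictly positive. On $\{U>D\}$ we have $W^{(j,n-j)}=D^n(U/D)^j$, which is strictly increasing in $j$. Hence the chamber ordering of \cref{lem:chamber-ordering} is $k^{(1)}=(n,0),\,k^{(2)}=(n-1,1),\dots$, i.e. by decreasing $j$. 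This gives the claim that terminal wealth is increasing in $B_n$.

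For part (a), the partial sums in that ordering are $\sum_{i=1}^{r}\pi_{k^{(i)}}=\Prob(B_n\ge n-r+1)$. The smallest $r$ with partial sum $\ge\alpha$ therefore corresponds to the largest $j=n-r+1$ with $\Prob(B_n\ge j)\ge\alpha$, which is $m_\alpha$. This index is well defined since $\Prob(B_n\ge 0)=1\ge\alpha$. Thus $k_{\alpha,\Gamma}=(m_\alpha,n-m_\alpha)$, and \cref{thm:chamber-quantile}(b) gives the quantile formula on the closed chamber. The one place needing care is this translation between the index $r_\alpha(\Gamma)$ and the binomial tail, specifically checking that "smallest $r$" matches "largest $j$" with the correct non-strict inequality. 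It is routine, but it is where an off-by-one error could slip in.

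Part (b) is then \cref{prop:monomial-max} applied to $k=(m_\alpha,n-m_\alpha)$. The unique maximizer of $U^{m_\alpha}D^{n-m_\alpha}$ on $\overline{\mathcal W}$ is $(m_\alpha/(nq_1),(n-m_\alpha)/(nq_2))$, with a zero coordinate when $m_\alpha\in\{0,n\}$ under the stated convention. This is the unconstrained chamber optimizer in the sense of \cref{thm:main-structural}. Whether it lies in the closed chamber $U\ge D$ is not asserted here, so it need not be checked.

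For part (c), set $\alpha=1/2$ and read \cref{prop:monomial-max} as a Kelly statement: the point is the one-period Kelly optimizer for the shadow law $(m_{1/2}/n,\,1-m_{1/2}/n)$. To connect with the exact upper-median result for the full problem, we invoke \cref{thm:main-structural}, which reduces the global supremum to the two chamber subproblems. Nothing further is claimed beyond identifying the chamber optimizer. If one wants the global statement, we would argue by the symmetric chamber $U<D$ with shadow count $n-m'_{1/2}$ and compare the two values. The corollary as stated only requires the chamber identification.
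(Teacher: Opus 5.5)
Your proposal is correct and takes the same route as the paper. The arrangement $\mathcal H_n$ collapses to the single wall $U=D$, the chamber ordering on $U>D$ is by the count $j$ of outcome~1, the quantile index $r_\alpha(\Gamma)$ translates into the largest $j$ with $\Prob(B_n\ge j)\ge\alpha$, and \cref{prop:monomial-max} supplies the shadow-Kelly point. The paper's terse proof leaves three things implicit that you make explicit: the index bookkeeping, the admissibility of both chambers, and the chamber-level reading of (c).
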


\begin{proof}
There are only two open chambers, $U>D$ and $U<D$, separated by the wall $U=D$. On $U>D$, the monomials $U^jD^{n-j}$ are strictly ordered by $j$, so the chamber ordering is determined by the integer $j=B_n$. The quantile index is therefore the largest $j$ with tail probability at least $\alpha$. The shadow-Kelly optimizer is then an immediate specialization of \cref{prop:monomial-max}. The median case is $\alpha=1/2$.
\end{proof}

\begin{remark}[Comparison with Ethier]
Ethier \cite{Ethier2004} proved the repeated binary exact median theorem directly. \Cref{cor:binary} shows that the binary result is the one-dimensional shadow of the general chamber theorem. The binary shadow probability is simply the median count law of the relevant chamber.
\end{remark}

\begin{example}[A concrete binary finite-horizon median problem]\label{ex:binary-main}
Let
\[
p=(0.6,0.4),\qquad q=(1,1),
\]
so the one-period wealth simplex is
\[
\overline{\mathcal W}=\{(U,D)\in[0,\infty)^2: U+D=1\}.
\]
Take horizon $n=3$ and upper median level $\alpha=\tfrac12$. The four count vectors in $\mathcal C_3$, together with the corresponding monomials and probabilities, are
\[
\begin{array}{c|c|c}
k & W^k & \pi_k \\ \hline
(3,0) & U^3 & 0.216 \\
(2,1) & U^2D & 0.432 \\
(1,2) & UD^2 & 0.288 \\
(0,3) & D^3 & 0.064
\end{array}
\]
On the chamber $\Gamma_+:=\{U>D\}$ the monomials are ordered
\[
U^3>U^2D>UD^2>D^3,
\]
so the upper median is reached at the second term:
\[
0.216<\tfrac12<0.216+0.432=0.648.
\]
Hence
\[
\Med^+\bigl(X_3(U,D)\bigr)=U^2D
\qquad (U>D),
\]
with active count vector $(2,1)$ and shadow-Kelly point
\[
\Bigl(\frac23,\frac13\Bigr).
\]
Since $\frac23>\frac13$, this point lies in $\Gamma_+$ and therefore solves the chamber subproblem exactly.

On the opposite chamber $\Gamma_-:=\{U<D\}$, the ordering reverses:
\[
D^3>UD^2>U^2D>U^3.
\]
Now
\[
0.064+0.288=0.352<\tfrac12<0.352+0.432=0.784,
\]
so the upper median is again $U^2D$, now appearing as the third term in the chamber ordering, with the same shadow-Kelly point $\bigl(\frac23,\frac13\bigr)$. Now, however, that point lies outside $\Gamma_-$, so the chamber optimum is forced to the wall $U=D$. Since $U+D=1$, this gives
\[
(U,D)=\Bigl(\frac12,\frac12\Bigr),
\qquad
\Med^+\bigl(X_3(U,D)\bigr)=\frac18.
\]
Thus the global optimum is attained on $\Gamma_+$ at
\[
\Bigl(\frac23,\frac13\Bigr),
\qquad
\sup_{W\in\overline{\mathcal W}}\Med^+\bigl(X_3(W)\bigr)=\frac{4}{27}.
\]
For comparison, the ordinary one-period Kelly point is
\[
W^{\mathrm K}=(0.6,0.4),
\]
so the exact finite-horizon median optimizer differs from the asymptotic Kelly profile even in this smallest nontrivial binary example. A slightly more detailed chamber-by-chamber discussion is given in \cref{app:binary-example}.
\end{example}

\section{Support-face refinement and strata}\label{sec:support-strata}

The chamber theorem describes the positive interior $\mathcal W^{++}$ and its chamber closures. To analyze exact boundary descent, one needs a direct structural description of the whole closed simplex.

\begin{definition}[Open and closed support faces]
For each nonempty subset $S\subseteq\{1,\dots,m\}$, define the associated open and closed support faces by
\[
\mathcal W_S^\circ:=\{W\in\overline{\mathcal W}: W_i>0 \iff i\in S\},
\qquad
\overline{\mathcal W}_S:=\{W\in\overline{\mathcal W}: W_i=0 \text{ for } i\notin S\}.
\]
Thus $\mathcal W_S^\circ$ is the relative interior of the face $\overline{\mathcal W}_S$.
\end{definition}

\begin{remark}[Support decomposition]
The closed simplex admits the finite disjoint decomposition
\[
\overline{\mathcal W}=\bigsqcup_{\varnothing\neq S\subseteq\{1,\dots,m\}} \mathcal W_S^\circ.
\]
If $W\in\overline{\mathcal W}_S$ and $\supp(k)\not\subseteq S$, then $W^k=0$.
\end{remark}

Fix a nonempty support set $S$, and choose once and for all an anchor index $r=r(S)\in S$. On the open support face $\mathcal W_S^\circ$, introduce ratio coordinates
\[
z_i:=\log\frac{W_i}{W_r},
\qquad i\in S\setminus\{r\}.
\]
These coordinates identify $\mathcal W_S^\circ$ with $\R^{S\setminus\{r\}}$, since one recovers $W$ from $z$ via
\begin{equation}\label{eq:ratio-param}
W_r(z)=\Bigl(q_r+\sum_{i\in S\setminus\{r\}} q_i e^{z_i}\Bigr)^{-1},
\qquad
W_i(z)=e^{z_i}W_r(z)
\quad (i\in S\setminus\{r\}).
\end{equation}
The induced hyperplane arrangement and face stratification on $\mathcal W_S^\circ$ are independent of the anchor choice; only the coordinate description changes.

\begin{lemma}[Monomial comparisons are linear in ratio coordinates]\label{lem:ratio-linear}
Fix a support set $S$ and $k,\ell\in\mathcal C_n$ with $\supp(k),\supp(\ell)\subseteq S$. Then on $\mathcal W_S^\circ$,
\[
\log\frac{W^k}{W^\ell}
=
\sum_{i\in S\setminus\{r\}} (k_i-\ell_i) z_i.
\]
In particular, the comparison $W^k\gtrless W^\ell$ is determined by the sign of a linear form in the ratio coordinates.
\end{lemma}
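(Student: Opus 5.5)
The plan is a direct computation in the ratio coordinates on $\mathcal W_S^\circ$. Since every $W\in\mathcal W_S^\circ$ has $W_i>0$ exactly for $i\in S$, and $\supp(k),\supp(\ell)\subseteq S$, both monomials $W^k$ and $W^\ell$ are strictly positive there. Hence the ratio $W^k/W^\ell$ and its logarithm are well defined. The convention $0^0=1$ lets us drop the factors with $i\notin S$, because there $k_i=\ell_i=0$. This gives
\[
\log\frac{W^k}{W^\ell}=\sum_{i\in S}(k_i-\ell_i)\log W_i .
\]

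The key step is to use the fact that $k$ and $\ell$ both lie in $\mathcal C_n$. Since the coordinates outside $S$ vanish, $\sum_{i\in S}k_i=\sum_{i\in S}\ell_i=n$, and therefore
\[
\sum_{i\in S}(k_i-\ell_i)=0 .
\]
Subtracting $\log W_r$ times this zero sum from the display above replaces each $\log W_i$ by $\log W_i-\log W_r=z_i$. The $i=r$ term then vanishes identically, leaving
\[
\sum_{i\in S\setminus\{r\}}(k_i-\ell_i)z_i .
\]
This is exactly the claimed formula. No appeal to the parametrization \eqref{eq:ratio-param} is needed beyond the definition of $z_i$.

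For the final sentence, note that $W^k/W^\ell>1$, $=1$ or $<1$ according as the logarithm is positive, zero or negative. That logarithm is the linear form $z\mapsto\sum_{i\in S\setminus\{r\}}(k_i-\ell_i)z_i$, so the comparison is decided by its sign.

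There is no real obstacle. The only point needing care is the cancellation of the anchor term, which relies on the equal total count $n$ of the two vectors; this is precisely why the form is linear (homogeneous) rather than merely affine in $z$. One could add a remark that changing the anchor $r$ changes $z$ by a linear change of coordinates and so preserves the zero sets of these forms. This supports the earlier claim that the induced arrangement is anchor-independent, but it is not needed for the lemma itself.
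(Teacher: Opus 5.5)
Your proof is correct and is essentially the paper's argument. The paper writes $W^k=W_r^n\prod_{i\in S\setminus\{r\}}e^{k_i z_i}$ and cancels the common factor $W_r^n$, which is the multiplicative form of your step of subtracting $\log W_r$ times the vanishing sum $\sum_{i\in S}(k_i-\ell_i)$; your explicit check that both monomials are strictly positive on $\mathcal W_S^\circ$ is a small extra point of care that the paper leaves implicit.
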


\begin{proof}
Since $W_i=e^{z_i}W_r$ for $i\in S\setminus\{r\}$ and both count vectors satisfy $\sum_i k_i=\sum_i \ell_i=n$, one has
\[
W^k=W_r^n \prod_{i\in S\setminus\{r\}} e^{k_i z_i},
\qquad
W^\ell=W_r^n \prod_{i\in S\setminus\{r\}} e^{\ell_i z_i}.
\]
Taking logarithms of the ratio gives the formula.
\end{proof}

\begin{definition}[Arrangement faces on a support face]
Fix a nonempty support set $S$. For each pair $k,\ell\in\mathcal C_n$ with $\supp(k),\supp(\ell)\subseteq S$, define the hyperplane
\[
H_{k,\ell}^S
:=
\Bigl\{z\in\R^{S\setminus\{r\}}:
\sum_{i\in S\setminus\{r\}} (k_i-\ell_i)z_i=0\Bigr\}.
\]
Let $\mathcal H_S$ denote the finite arrangement of all such hyperplanes, and let $\mathfrak F_S$ denote the set of all relatively open faces of that arrangement.
\end{definition}

\begin{definition}[Strata]
For each nonempty support set $S$ and each face $F\in\mathfrak F_S$, define the corresponding stratum
\[
\sigma(S,F):=\{W(z): z\in F\}\subseteq \mathcal W_S^\circ,
\]
where $W(z)$ is given by \eqref{eq:ratio-param}.
\end{definition}

\begin{lemma}[Finite stratification]\label{lem:finite-stratification}
The family of strata $\sigma(S,F)$ is finite, pairwise disjoint, and its union is all of $\overline{\mathcal W}$.
\end{lemma}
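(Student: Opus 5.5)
The proof has three parts: finiteness, pairwise disjointness, and covering. Each reduces to two facts: the support decomposition of $\overline{\mathcal W}$ into open support faces, and the standard fact that a finite central hyperplane arrangement partitions its ambient space into finitely many relatively open faces.

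\emph{Finiteness.} There are finitely many nonempty $S\subseteq\{1,\dots,m\}$. For each $S$, the set $\mathcal C_n$ is finite, so $\mathcal H_S$ is a finite arrangement in $\R^{S\setminus\{r\}}$. A finite arrangement has finitely many relatively open faces, since each face is determined by a sign vector in $\{-,0,+\}^{\mathcal H_S}$. Hence $\mathfrak F_S$ is finite, and so is the family of pairs $(S,F)$.

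\emph{Covering and disjointness.} By the support-decomposition remark, $\overline{\mathcal W}=\bigsqcup_S \mathcal W_S^\circ$, so it suffices to work inside a single open support face. Fix $S$ and its anchor $r$. I will check that the map $z\mapsto W(z)$ from \eqref{eq:ratio-param} is a bijection $\R^{S\setminus\{r\}}\to\mathcal W_S^\circ$.
\begin{itemize}[itemsep=2pt]
  \item \emph{Well-defined.} For any $z$, the formula gives $W_r>0$ and $W_i>0$ for $i\in S\setminus\{r\}$. Setting $W_i=0$ for $i\notin S$, one checks $q\cdot W=W_r\bigl(q_r+\sum_{i\in S\setminus\{r\}} q_ie^{z_i}\bigr)=1$. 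So $W(z)\in\mathcal W_S^\circ$.
  \item \emph{Inverse.} For $W\in\mathcal W_S^\circ$, define $z_i=\log(W_i/W_r)$. The budget constraint $q\cdot W=1$ then forces $W_r$ to equal the stated formula. Hence $W(z(W))=W$ and $z(W(z))=z$.
\end{itemize}
The faces in $\mathfrak F_S$ partition $\R^{S\setminus\{r\}}$: every $z$ has a unique sign vector with respect to the hyperplanes of $\mathcal H_S$, and the faces are exactly the nonempty sign-vector classes. Pushing this partition forward through the bijection $z\mapsto W(z)$ shows that the strata $\sigma(S,F)$, $F\in\mathfrak F_S$, partition $\mathcal W_S^\circ$. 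Strata with different $S$ lie in different, disjoint open support faces. Taking the union over all $S$ gives all of $\overline{\mathcal W}$.

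One degenerate case needs separate mention: $|S|=1$. Then $\R^{S\setminus\{r\}}=\R^0$ is a single point and $\mathcal H_S$ is empty, because the only $k$ with support in $S=\{r\}$ is $ne_r$, so no pairs $k\ne\ell$ exist. I adopt the convention that the unique face is the point itself, so the stratum is the vertex $W=e_r/q_r$. Similarly, if some $H^S_{k,\ell}$ is the zero form, it is not a genuine hyperplane and is simply dropped. This cannot actually happen for $k\ne\ell$ both supported in $S$: since $\sum_i k_i=\sum_i \ell_i=n$, equality of all coordinates except $r$ forces $k=\ell$.

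I expect the main obstacle to be bookkeeping rather than substance: making the bijectivity of the ratio parametrization and these degenerate conventions explicit. The hyperplane-face partition itself is standard.
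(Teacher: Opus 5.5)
Your proof is correct and follows the paper's route. Finiteness comes from finitely many supports times finitely many arrangement faces, and covering and disjointness come from the support decomposition together with the unique face containing the ratio-coordinate point $z$. Your explicit check that $z\mapsto W(z)$ is a bijection onto $\mathcal W_S^\circ$, which the paper's disjointness claim uses without comment, and your treatment of the $|S|=1$ and zero-form cases are added bookkeeping within the same argument.
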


\begin{proof}
There are only finitely many nonempty support sets $S$, and for each fixed $S$ the arrangement $\mathcal H_S$ has only finitely many relatively open faces. Hence the family is finite.

Fix $W\in\overline{\mathcal W}$. Let $S:=\{i: W_i>0\}$. Then $W\in\mathcal W_S^\circ$. In the ratio coordinates on $\mathcal W_S^\circ$, the corresponding point $z$ lies in a unique relatively open face $F$ of the arrangement $\mathcal H_S$. Thus $W\in\sigma(S,F)$. This proves that the strata cover $\overline{\mathcal W}$. Pairwise disjointness is immediate from uniqueness of the support set and of the arrangement face.
\end{proof}

\section{Weak exact recursive boundary algorithm}\label{sec:weak-recursion}

We now refine the interior chamber theorem to a finite exact recursive solver on the full closed simplex.

\begin{theorem}[Exact quantile description on a stratum]\label{thm:stratum-quantile}
Fix $\alpha\in(0,1)$ and a stratum $\sigma=\sigma(S,F)$. Write
\[
\Pi_S:=\sum_{\substack{k\in\mathcal C_n\\ \supp(k)\subseteq S}} \pi_k
=
\Prob(N_i=0\text{ for every } i\notin S).
\]
Then exactly one of the following alternatives holds.
\begin{enumerate}[label=(\alph*), itemsep=3pt]
  \item If $\Pi_S<\alpha$, then
  \[
  \Quant_\alpha^+\bigl(X_n(W)\bigr)=0
  \qquad\text{for every } W\in\sigma.
  \]
  \item If $\Pi_S\ge \alpha$, then there exists a count vector $k_\sigma\in\mathcal C_n$ with $\supp(k_\sigma)\subseteq S$ such that
  \[
  \Quant_\alpha^+\bigl(X_n(W)\bigr)=W^{k_\sigma}
  \qquad\text{for every } W\in\sigma.
  \]
\end{enumerate}
Thus the upper quantile is either identically zero or a single monomial on each stratum.
\end{theorem}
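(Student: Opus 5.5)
The plan is to analyze the distribution of $X_n(W)$ directly for $W$ in a fixed stratum $\sigma=\sigma(S,F)$, splitting the count vectors into those supported in $S$ and those that are not. By the support decomposition remark, for $W\in\mathcal W_S^\circ$ one has $W^k=0$ whenever $\supp(k)\not\subseteq S$. Conversely, $W^k>0$ whenever $\supp(k)\subseteq S$, because all $W_i$ with $i\in S$ are positive. Hence the law of $X_n(W)$ is an atom at $0$ of mass $1-\Pi_S$, together with the strictly positive values $W^k$, $\supp(k)\subseteq S$, carrying masses $\pi_k$.

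Case (a), $\Pi_S<\alpha$. For every $t>0$ we have $\Prob(X_n(W)\ge t)\le \Pi_S<\alpha$, so the supremum defining $\Quant_\alpha^+$ is taken over a subset of $\{0\}$. It equals $0$, since $t=0$ is admissible trivially. This case is immediate.

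Case (b), $\Pi_S\ge\alpha$. I would first show that the weak order among the positive monomials is constant on $\sigma$. By \cref{lem:ratio-linear}, for $k,\ell$ supported in $S$ the sign of $\log(W^k/W^\ell)$ is the sign of the linear form $\sum_{i\in S\setminus\{r\}}(k_i-\ell_i)z_i$. On a relatively open face $F$ of $\mathcal H_S$, each such form is either identically zero (when $F$ lies in $H^S_{k,\ell}$, or when $k-\ell$ vanishes on $S\setminus\{r\}$, which forces $k=\ell$ on $S$ because the sums agree) or of constant sign. This is the standard fact that a relatively open face of an arrangement lies on a fixed side of, or inside, each hyperplane.

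Hence the set $\mathcal C_n^S:=\{k:\supp(k)\subseteq S\}$ carries a total preorder $\succeq_\sigma$ that does not depend on $W\in\sigma$. Group $\mathcal C_n^S$ into tie classes $T_1\succ T_2\succ\cdots\succ T_J$; for $W\in\sigma$, all monomials in $T_j$ share a common value $v_j(W)$, and $v_1(W)>\cdots>v_J(W)>0$. Let $j^*$ be the smallest index with $\sum_{j\le j^*}\pi(T_j)\ge\alpha$. It exists because the total mass $\Pi_S\ge\alpha$, and it is independent of $W$. Pick any $k_\sigma\in T_{j^*}$. Repeating the argument of \cref{thm:chamber-quantile}(b) gives $\Prob(X_n\ge v_{j^*})\ge\alpha$, while for $t>v_{j^*}$ only the classes $j<j^*$ contribute, so $\Prob(X_n\ge t)<\alpha$. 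Therefore $\Quant^+_\alpha(X_n(W))=v_{j^*}(W)=W^{k_\sigma}$.

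The step that needs the most care is the constancy of the preorder on a face, together with handling ties correctly. Unlike the chamber case, a stratum may sit inside walls, so the argument must use tie classes rather than a strict order; and it must check that the atom at zero, being below every positive value, never interferes when $\Pi_S\ge\alpha$. The alternatives (a) and (b) are mutually exclusive by definition, and this gives the "exactly one" claim.
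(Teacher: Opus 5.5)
Your proposal is correct and follows the paper's proof essentially step for step. You split off the zero atom of mass $1-\Pi_S$, settle case (a) with the bound $\Prob(X_n(W)\ge t)\le\Pi_S<\alpha$ for $t>0$, and in case (b) use \cref{lem:ratio-linear} to show that the weak ordering of the positive monomials is constant on the face $F$; your explicit tie-class bookkeeping spells out the step the paper compresses into ``the quantile index depends only on that weak ordering and on the fixed masses $\pi_k$.''
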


\begin{proof}
Fix $W\in\sigma$. If $\supp(k)\not\subseteq S$, then $W^k=0$. Hence every positive value of $X_n(W)$ comes from a count vector whose support is contained in $S$, and the total mass of all positive values is exactly $\Pi_S$.

If $\Pi_S<\alpha$, then every threshold $t>0$ has
\[
\Prob(X_n(W)\ge t)\le \Pi_S<\alpha,
\]
whereas $\Prob(X_n(W)\ge 0)=1$. Therefore the upper $\alpha$-quantile is exactly $0$ on $\sigma$.

Assume now that $\Pi_S\ge \alpha$. For count vectors $k,\ell$ with supports contained in $S$, \cref{lem:ratio-linear} shows that the sign of $\log(W^k/W^\ell)$ is determined by a linear form whose sign is constant on the arrangement face $F$. Hence, as $W$ varies in $\sigma$, the weak ordering of the finite family $\{W^k:k\in\mathcal C_n\}$ is constant. Since the quantile index depends only on that weak ordering and on the fixed masses $\pi_k$, there exists a single count vector $k_\sigma$ with $\supp(k_\sigma)\subseteq S$ such that
\[
\Quant_\alpha^+\bigl(X_n(W)\bigr)=W^{k_\sigma}
\qquad (W\in\sigma).
\]
\end{proof}

\begin{remark}[Compatibility with the chamber theorem]
When $S=\{1,\dots,m\}$ and $F$ is a top-dimensional arrangement chamber, \cref{thm:stratum-quantile} reduces to \cref{thm:chamber-quantile}. The present refinement carries the same logic to all support faces and lower-dimensional arrangement faces.
\end{remark}

For a stratum $\sigma$, define its value by
\[
V(\sigma):=\sup_{W\in\sigma}\Quant_\alpha^+\bigl(X_n(W)\bigr).
\]

\begin{definition}[Child strata]
If $\sigma$ is a stratum, its child strata are those strata $\tau$ satisfying
\[
\tau\subseteq \partial_{\mathrm{rel}}\overline{\sigma},
\]
where $\overline{\sigma}$ denotes the closure of $\sigma$ in $\overline{\mathcal W}$ and $\partial_{\mathrm{rel}}$ denotes relative boundary in the affine span of $\overline{\sigma}$.
\end{definition}

\begin{lemma}[Boundary domination of the quantile]\label{lem:boundary-domination}
Fix a stratum $\sigma$ in the nonzero case of \cref{thm:stratum-quantile}, with active count vector $k_\sigma$. Let $W^{(j)}\in\sigma$ and suppose $W^{(j)}\to W^*\in\overline{\sigma}$. Then
\[
\Quant_\alpha^+\bigl(X_n(W^*)\bigr)\ge (W^*)^{k_\sigma}
=
\lim_{j\to\infty}\Quant_\alpha^+\bigl(X_n(W^{(j)})\bigr).
\]
\end{lemma}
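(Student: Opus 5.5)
The proof has two parts: identify the limit, then prove a lower bound at the limit point $W^*$ by passing weak tail-probability inequalities through $j\to\infty$.

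\textbf{The limit.} By \cref{thm:stratum-quantile}(b), $\Quant_\alpha^+(X_n(W^{(j)}))=(W^{(j)})^{k_\sigma}$ for every $j$. The map $W\mapsto W^{k_\sigma}$ is a polynomial with the convention $0^0=1$, so it is continuous on $\overline{\mathcal W}$. Hence the right-hand limit exists and equals $(W^*)^{k_\sigma}$.

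\textbf{The lower bound.} Write $t^*:=(W^*)^{k_\sigma}$. If $t^*=0$ there is nothing to prove, since the quantile is nonnegative. So assume $t^*>0$. By the definition of the upper quantile as a supremum, it suffices to show
\[
\Prob\bigl(X_n(W^*)\ge t^*\bigr)\ge\alpha.
\]

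First fix $j$. From the proof of \cref{thm:stratum-quantile}, the weak ordering of $\{W^k\}$ is constant on $\sigma$. Define
\[
A:=\{k\in\mathcal C_n:\ W^k\ge W^{k_\sigma}\ \text{on }\sigma\}.
\]
This set does not depend on $j$, and $k_\sigma\in A$. Since $(W^{(j)})^{k_\sigma}$ is the upper quantile at $W^{(j)}$, and is attained as an atom of the finite distribution, we have
\[
\Prob\bigl(X_n(W^{(j)})\ge (W^{(j)})^{k_\sigma}\bigr)\ge\alpha .
\]
For a finite distribution the supremum defining $\Quant_\alpha^+$ is attained, because $t\mapsto\Prob(Y\ge t)$ is left-continuous. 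Therefore $\sum_{k\in A}\pi_k\ge\alpha$.

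Now pass to the limit. For each $k\in A$, the inequality $(W^{(j)})^k\ge (W^{(j)})^{k_\sigma}$ holds for all $j$. Continuity gives $(W^*)^k\ge t^*$. So every $k\in A$ contributes to the event $\{X_n(W^*)\ge t^*\}$, and
\[
\Prob\bigl(X_n(W^*)\ge t^*\bigr)\ge\sum_{k\in A}\pi_k\ge\alpha .
\]
This yields $\Quant_\alpha^+(X_n(W^*))\ge t^*$.

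\textbf{Main obstacle.} The delicate step is the constancy of the set $A$ along the sequence. It requires that the pairwise comparisons $W^k$ versus $W^{k_\sigma}$ have constant sign on $\sigma$, including for $k$ with $\supp(k)\not\subseteq S$. Such $k$ satisfy $W^k=0$ on $\sigma$, so their comparisons are trivially constant. For $k$ with support inside $S$, constancy follows from \cref{lem:ratio-linear} and the fact that $F$ is a single face of $\mathcal H_S$.

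The inequality is only one-sided because in the limit the monomials can coalesce or drop to zero. Values of $k\notin A$ may rise to meet $t^*$, which can only increase the tail probability. That is exactly the direction we need.
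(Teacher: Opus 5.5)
Your proof is correct and follows the paper's argument essentially step for step: the paper likewise fixes the $j$-independent set $A_\sigma=\{k\in\mathcal C_n: W^k\ge W^{k_\sigma}\text{ on }\sigma\}$, uses $\sum_{k\in A_\sigma}\pi_k\ge\alpha$, passes the weak inequalities $(W^{(j)})^k\ge (W^{(j)})^{k_\sigma}$ to the limit, and reads off the limit identity from continuity of the monomial $W\mapsto W^{k_\sigma}$. The differences are cosmetic. You justify $\sum_{k\in A}\pi_k\ge\alpha$ via attainment of the supremum in $\Quant_\alpha^+$, where the paper simply cites the definition of $k_\sigma$. Your separate treatment of $t^*=0$ is harmless but unnecessary.
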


\begin{proof}
Because the weak ordering is constant on $\sigma$, the set
\[
A_\sigma:=\{k\in\mathcal C_n: W^k\ge W^{k_\sigma}\text{ for }W\in\sigma\}
\]
is independent of the choice of $W\in\sigma$. By definition of $k_\sigma$,
\[
\sum_{k\in A_\sigma}\pi_k\ge \alpha.
\]
For each $k\in A_\sigma$ and each $j$, one has
\[
\bigl(W^{(j)}\bigr)^k\ge \bigl(W^{(j)}\bigr)^{k_\sigma}.
\]
Passing to the limit yields
\[
(W^*)^k\ge (W^*)^{k_\sigma}
\qquad (k\in A_\sigma).
\]
Therefore
\[
\Prob\!\left(X_n(W^*)\ge (W^*)^{k_\sigma}\right)
\ge
\sum_{k\in A_\sigma}\pi_k
\ge \alpha,
\]
which implies
\[
\Quant_\alpha^+\bigl(X_n(W^*)\bigr)\ge (W^*)^{k_\sigma}.
\]
The displayed limit identity follows from continuity of the monomial $W\mapsto W^{k_\sigma}$ on $\overline{\mathcal W}$.
\end{proof}

\begin{proposition}[Strict concavity on a fixed support face]\label{prop:strict-concavity}
Fix a stratum $\sigma=\sigma(S,F)$ in the nonzero case of \cref{thm:stratum-quantile}. In the ratio coordinates on $\mathcal W_S^\circ$, define
\begin{equation}\label{eq:psi-def}
\psi_\sigma(z)
:=
\log W(z)^{k_\sigma}
=
\sum_{i\in S\setminus\{r\}} (k_\sigma)_i z_i
-
n\log\Bigl(q_r+\sum_{i\in S\setminus\{r\}} q_i e^{z_i}\Bigr).
\end{equation}
Then $\psi_\sigma$ is strictly concave on $\R^{S\setminus\{r\}}$. Consequently,
\[
V(\sigma)=\sup_{z\in F} e^{\psi_\sigma(z)},
\]
and the same-support closure problem
\[
\sup_{z\in \overline F}\psi_\sigma(z)
\]
has at most one maximizer.
\end{proposition}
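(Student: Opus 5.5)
Our plan has three steps: verify the closed form \eqref{eq:psi-def}, prove strict concavity of $\psi_\sigma$ through its Hessian, and read off the two consequences.

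\textbf{Step 1: the formula.} Substitute the ratio parametrization \eqref{eq:ratio-param} into $W^{k_\sigma}$, exactly as in the proof of \cref{lem:ratio-linear}. Since $\supp(k_\sigma)\subseteq S$ by \cref{thm:stratum-quantile}(b) and $\sum_i (k_\sigma)_i=n$, we get
\[
W(z)^{k_\sigma}=W_r(z)^n\prod_{i\in S\setminus\{r\}}e^{(k_\sigma)_i z_i}.
\]
Taking logarithms gives \eqref{eq:psi-def}.

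\textbf{Step 2: strict concavity.} The linear part of $\psi_\sigma$ is irrelevant. It therefore suffices to show that
\[
A(z):=\log\Bigl(q_r+\sum_{i\in S\setminus\{r\}} q_i e^{z_i}\Bigr)
\]
is strictly convex on $\R^{S\setminus\{r\}}$. We adjoin a fixed coordinate $z_r:=0$ and set
\[
\theta_i:=\frac{q_ie^{z_i}}{q_r+\sum_{j\in S\setminus\{r\}} q_je^{z_j}},\qquad i\in S.
\]
This is a strictly positive probability vector on $S$. The standard log-sum-exp computation gives
\[
\nabla^2 A=\operatorname{diag}(\theta_{S\setminus\{r\}})-\theta_{S\setminus\{r\}}\theta_{S\setminus\{r\}}^{\top}.
\]
Equivalently, for $v\in\R^{S\setminus\{r\}}$ extended by $v_r:=0$,
\[
v^\top\nabla^2A\,v=\operatorname{Var}_\theta(v).
\]
This variance vanishes only if $v$ is constant on $S$. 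Since $v_r=0$, that forces $v=0$. Hence $\nabla^2 A$ is positive definite everywhere, which is the exponential-family (softmax) concavity mentioned in the introduction.

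If $|S|=1$ the coordinate space is $\R^0$. Strict concavity is then vacuous and the uniqueness claim is trivial, so we treat this as a degenerate case.

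\textbf{Step 3: consequences.} By \cref{thm:stratum-quantile}(b), $\Quant_\alpha^+(X_n(W))=W^{k_\sigma}=e^{\psi_\sigma(z)}$ for all $W=W(z)\in\sigma$. Since $z\mapsto W(z)$ is a bijection from $F$ onto $\sigma$, this gives $V(\sigma)=\sup_{z\in F}e^{\psi_\sigma(z)}$.

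For the closure problem, $\overline F$ is a closed face of a hyperplane arrangement, so it is a polyhedral and hence convex set. A strictly concave function on a convex set has at most one maximizer: if $z\neq z'$ both attained the maximum, then their midpoint, which lies in $\overline F$, would give a strictly larger value.

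\textbf{Expected obstacle.} The only step needing care is positive-definiteness (not just semidefiniteness) of the Hessian. The anchor normalization $v_r=0$ is exactly what removes the one-dimensional degenerate direction of the full softmax. We note that existence of a maximizer is not claimed, since $\overline F$ may be unbounded.
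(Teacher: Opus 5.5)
Your proof is correct and follows essentially the same route as the paper: you derive \eqref{eq:psi-def} from the ratio parametrization, reduce strict concavity to strict convexity of the log-sum-exp term, and then read off $V(\sigma)$ and uniqueness of a maximizer on the convex polyhedron $\overline F$. Your variance computation with $v_r=0$ spells out what the paper only asserts: the constant anchor term $q_r$ removes the flat direction of the full log-sum-exp, so the Hessian is genuinely positive definite.
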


\begin{proof}
Using \eqref{eq:ratio-param}, one obtains the identity \eqref{eq:psi-def}. The first term is linear in $z$. The map
\[
z\mapsto \log\Bigl(q_r+\sum_{i\in S\setminus\{r\}} q_i e^{z_i}\Bigr)
\]
is a genuine log-sum-exp function with strictly positive coefficients and is therefore strictly convex on $\R^{S\setminus\{r\}}$. Hence $\psi_\sigma$ is strictly concave.

Since $F$ is a relatively open face of a finite hyperplane arrangement, its closure $\overline F$ is a closed polyhedron. The formula for $V(\sigma)$ follows from \cref{thm:stratum-quantile}, and strict concavity yields uniqueness of any maximizer on $\overline F$.
\end{proof}

\begin{remark}[Softmax and exponential-family viewpoint]
The function
\[
z\mapsto \log\Bigl(q_r+\sum_{i\in S\setminus\{r\}} q_i e^{z_i}\Bigr)
\]
is the log-partition function of a finite exponential family, so \cref{eq:psi-def} is the usual softmax log-likelihood in natural coordinates with empirical frequencies $k_\sigma/n$ and baseline prices $q$. The strict concavity in \cref{prop:strict-concavity} is the standard strict concavity of this natural-coordinate parametrization.
\end{remark}

\begin{proposition}[Boundary descent for a stratum]\label{prop:boundary-descent}
Fix a stratum $\sigma=\sigma(S,F)$ in the nonzero case of \cref{thm:stratum-quantile}. Then exactly one of the following alternatives holds.
\begin{enumerate}[label=(\roman*), itemsep=3pt]
  \item There exists a unique point $W_\sigma^*\in\sigma$ such that
  \[
  V(\sigma)=\Quant_\alpha^+\bigl(X_n(W_\sigma^*)\bigr)=\bigl(W_\sigma^*\bigr)^{k_\sigma}.
  \]
  \item Every maximizing sequence in $\sigma$ has a subsequence converging to a point in a finite union of child strata of $\sigma$.
\end{enumerate}
\end{proposition}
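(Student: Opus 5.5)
The plan is to combine compactness of $\overline{\mathcal W}$ with \cref{thm:stratum-quantile}, \cref{lem:boundary-domination} and \cref{prop:strict-concavity}. Fix $\sigma=\sigma(S,F)$ in the nonzero case, with active count vector $k_\sigma$. On $\sigma$ the objective is the continuous function $W\mapsto W^{k_\sigma}$, so $V(\sigma)=\sup_{W\in\sigma}W^{k_\sigma}$.

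Take any maximizing sequence $W^{(j)}\in\sigma$. Since $\overline{\mathcal W}$ is compact, I pass to a subsequence converging to some $W^*\in\overline\sigma$. Continuity of the monomial gives $(W^*)^{k_\sigma}=V(\sigma)$. I then split according to whether $W^*\in\sigma$.

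\emph{Case $W^*\in\sigma$.} By \cref{thm:stratum-quantile}, $\Quant_\alpha^+(X_n(W^*))=(W^*)^{k_\sigma}=V(\sigma)$, so $W^*$ is a maximizer in $\sigma$. For uniqueness I pass to ratio coordinates. Any maximizer in $\sigma$ corresponds to a maximizer of $\psi_\sigma$ on $F$. Since $F$ is a relatively open face of a hyperplane arrangement, it is convex. Two distinct maximizers $z,z'\in F$ would therefore give $\psi_\sigma\bigl(\tfrac{z+z'}2\bigr)>\psi_\sigma(z)$ with $\tfrac{z+z'}2\in F$, contradicting strict concavity from \cref{prop:strict-concavity}. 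This yields alternative (i) with $W^*_\sigma:=W^*$.

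\emph{Case (i) fails.} Then no maximizing sequence can have a subsequential limit in $\sigma$, since by the previous case such a limit would be a maximizer. So every maximizing sequence has a subsequence converging to a point of $\overline\sigma\setminus\sigma$. It remains to show that $\overline\sigma\setminus\sigma$ lies in a finite union of child strata. Finiteness is automatic from \cref{lem:finite-stratification}, because there are only finitely many strata. The real content is the \emph{frontier condition}: $\overline\sigma\setminus\sigma$ is a union of whole strata $\tau$, each satisfying $\tau\subseteq\partial_{\mathrm{rel}}\overline\sigma$.

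I expect this frontier condition to be the main obstacle, and I would prove it in two parts.
\begin{enumerate}[label=(\alph*), itemsep=2pt]
  \item \emph{Same support.} Boundary points of $\sigma$ that remain in $\mathcal W_S^\circ$ lie in the closure of $F$ minus $F$. For a hyperplane arrangement, this set is a union of lower-dimensional faces of $\mathcal H_S$. These faces lie in the relative boundary because they sit on hyperplanes that $F$ avoids.
  \item \emph{Smaller support $T\subsetneq S$.} Let $W^*\in\mathcal W_T^\circ$ be a boundary point. By \cref{lem:ratio-linear}, for count vectors supported in $T$ the comparison linear forms on $\mathcal W_S^\circ$ restrict to exactly the forms defining $\mathcal H_T$, when an anchor in $T$ is used. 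The limiting weak order on these monomials is therefore determined by the face of $\mathcal H_T$ containing $W^*$, and I would argue that the whole face $\tau\ni W^*$ is approached from $\sigma$. I would try to show this by translating the sequence along $\tau$ while keeping the ratio coordinates indexed by $S\setminus T$ sent to $-\infty$; the defining sign conditions of $F$ that involve indices outside $T$ are dominated by those coordinates. Here $\tau$ denotes the stratum $\sigma(T,\text{face})$, and the union of all such $\tau$ over the finitely many $T\subsetneq S$ completes the claim.
\end{enumerate}
If proving the full containment $\tau\subseteq\overline\sigma$ turns out to be delicate, I would fall back on the weaker statement that the limit point lies in some stratum meeting $\partial_{\mathrm{rel}}\overline\sigma$. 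I would then note that \cref{lem:boundary-domination} already guarantees that the quantile at that limit is at least $V(\sigma)$, which is all the recursive algorithm needs.

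Finally, for the phrase ``exactly one'', I will treat the statement as the dichotomy ``(i) holds, or (i) fails and then (ii) holds''. I would only remark on exclusivity when the unique maximizer is not also reachable as a boundary limit: in the same-support case this follows from uniqueness of the maximizer of $\psi_\sigma$ on $\overline F$ given by \cref{prop:strict-concavity}.
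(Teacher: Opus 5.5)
Your architecture matches the paper's proof. Both use compactness and get uniqueness from strict concavity of $\psi_\sigma$ on the convex face $F$. Both split boundary limits into same-support limits (the paper's bounded-$z$ case; your part (a) handles it correctly) and smaller-support limits (the paper's unbounded-$z$ case).

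\textbf{The gap is step (b).} You claim that the sign conditions of $F$ involving indices outside $T$ are ``dominated'' as those coordinates go to $-\infty$. This fails once $|S\setminus T|\ge 2$. A form $(k-\ell)\cdot u$ whose $(S\setminus T)$-part is $c(e_a-e_b)$ contributes $c(u_a-u_b)$, and that can stay bounded. Eliminating such bounded differences then imposes constraints on $u_T$ that are not walls of $\mathcal H_T$.

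Concretely, take $m=5$, $n=3$, $T=\{1,2,3\}$.
\begin{enumerate}[label=(\arabic*), itemsep=2pt]
  \item The forms $2u_1-2u_2+u_4-u_5$ and $u_1-u_3-2u_4+2u_5$ both belong to $\mathcal H_S$. Twice the first plus the second is $5u_1-4u_2-u_3$.
  \item For all large $\lambda$, the points $u=(1,1.1,0,0.35-\lambda,-\lambda)$ lie in a single open chamber $F$ on which both forms are positive (their values are $0.15$ and $0.3$). Hence $W_1^5\ge W_2^4W_3$ on $\overline\sigma$.
  \item As $\lambda\to\infty$ these points converge to a point of the full-dimensional stratum $\tau$ of $\mathcal W_T^\circ$ through $\log W_T\equiv(1,1.1,0)$.
  \item The point $(1,1.4,0)$ lies in the same chamber of $\mathcal H_T$. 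Walls of $\mathcal H_T$ have integer normals with entries bounded by $3$, so none has slope ratio in $(1.1,1.4)$. Yet $(1,1.4,0)$ violates $5u_1-4u_2-u_3\ge 0$.
\end{enumerate}
So $\tau$ meets $\overline\sigma$ without being contained in it. The frontier condition you set out to prove is false in general, and no translation argument can establish it.

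The paper's proof does not establish it either. Its Case 2 simply asserts that the smaller-support limit lies ``in a child stratum of $\sigma$.'' What is actually proved there is your fallback, and it is all that \cref{cor:child-domination} and \cref{thm:weak-recursion} need. The limit lies in a stratum of strictly smaller rank $(|S|,\dim F)$ that meets $\overline\sigma\setminus\sigma$. \cref{lem:boundary-domination} then shows the quantile at that limit is at least $V(\sigma)$.

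So drop (b), and state explicitly that ``child stratum'' is read as ``stratum meeting $\overline\sigma\setminus\sigma$.'' For same-support strata this is exactly your part (a). With that reading the argument is complete.

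Exclusivity is also simpler than your last paragraph suggests. Under (i), the constant sequence at $W_\sigma^*$ is maximizing, and its limit lies in $\sigma$, hence in no other stratum.
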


\begin{proof}
By \cref{prop:strict-concavity}, the problem $\sup_{z\in \overline F}\psi_\sigma(z)$ has at most one maximizer. If that maximizer exists and lies in $F$, then alternative (i) holds.

Assume that alternative (i) fails, and let $z^{(j)}\in F$ be a maximizing sequence for $\psi_\sigma$. Since
\[
0\le W_i\le \frac{1}{q_i}
\qquad (W\in\overline{\mathcal W}),
\]
the monomial $W^{k_\sigma}$ is uniformly bounded above on $\overline{\mathcal W}$, so such maximizing sequences have finite limiting value.

There are two cases.

\smallskip
\noindent\emph{Case 1: bounded ratio coordinates.}
If $\{z^{(j)}\}$ is bounded, then after passing to a subsequence one has $z^{(j)}\to z^*\in\overline F$. Since no maximizer lies in $F$, necessarily $z^*\in\overline F\setminus F$. The point $W(z^*)$ lies in the relative boundary of the same-support face, so it belongs to a stratum with the same support set $S$ but lower-dimensional arrangement face. Hence it lies in a child stratum of $\sigma$.

\smallskip
\noindent\emph{Case 2: unbounded ratio coordinates.}
If $\|z^{(j)}\|\to\infty$, then after passing to a subsequence some coordinate satisfies either $z_i^{(j)}\to+\infty$ or $z_i^{(j)}\to-\infty$. If $z_i^{(j)}\to+\infty$, then the denominator in \eqref{eq:ratio-param} tends to $+\infty$, so $W_r(z^{(j)})\to 0$. If $z_i^{(j)}\to-\infty$, then
\[
W_i(z^{(j)})=e^{z_i^{(j)}}W_r(z^{(j)})\to 0
\]
because $W_r(z^{(j)})\le 1/q_r$. In either situation, at least one coordinate from the support set $S$ tends to $0$.

Since the closed support face $\overline{\mathcal W}_S$ is compact, the corresponding wealth profiles $W(z^{(j)})$ admit a convergent subsequence with limit $W^*\in\overline{\mathcal W}_S$. The calculation above shows that $W^*$ belongs to a proper support face $\overline{\mathcal W}_T$ for some strict subset $T\subsetneq S$. Hence $W^*$ lies in a child stratum of $\sigma$ with smaller support.

In both cases, every maximizing sequence has a subsequence converging to a point in a finite union of child strata.
\end{proof}

\begin{corollary}[Child-stratum domination]\label{cor:child-domination}
Fix a stratum $\sigma$.
\begin{enumerate}[label=(\alph*), itemsep=3pt]
  \item If $\sigma$ is in the zero case of \cref{thm:stratum-quantile}, then $V(\sigma)=0$.
  \item If $\sigma$ is in the nonzero case of \cref{thm:stratum-quantile} and alternative \emph{(ii)} of \cref{prop:boundary-descent} holds, then
  \[
  V(\sigma)\le \max_{\tau\prec\sigma} V(\tau),
  \]
  where $\tau\prec\sigma$ ranges over the child strata of $\sigma$.
\end{enumerate}
\end{corollary}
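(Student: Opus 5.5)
Part (a) is immediate from \cref{thm:stratum-quantile}(a): on a zero-case stratum the quantile vanishes identically, so $V(\sigma)=0$.

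For part (b), fix a nonzero-case stratum $\sigma$ for which alternative (ii) of \cref{prop:boundary-descent} holds, and let $k_\sigma$ be its active count vector.

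First, I would choose a maximizing sequence $W^{(j)}\in\sigma$. By \cref{thm:stratum-quantile}(b) this means
\[
\bigl(W^{(j)}\bigr)^{k_\sigma}\to V(\sigma).
\]
Since $\overline{\mathcal W}$ is compact and the values are bounded, alternative (ii) lets me pass to a subsequence converging to some $W^*$. The limit lies in a child stratum $\tau\prec\sigma$, that is, $\tau\subseteq\partial_{\mathrm{rel}}\overline\sigma$. In particular $W^*\in\overline\sigma$.

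Second, I would apply \cref{lem:boundary-domination} to this subsequence. It gives
\[
\Quant_\alpha^+\bigl(X_n(W^*)\bigr)\ge (W^*)^{k_\sigma}=\lim_j \Quant_\alpha^+\bigl(X_n(W^{(j)})\bigr)=V(\sigma).
\]
Since $W^*\in\tau$, the left side is at most $V(\tau)$. Hence
\[
V(\sigma)\le V(\tau)\le\max_{\tau'\prec\sigma}V(\tau').
\]
The maximum is over a finite set by \cref{lem:finite-stratification}, so it is well defined.

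The only delicate point is that the maximizing sequence must be taken for the quantile itself, not merely for $\psi_\sigma$. On $\sigma$ these two objectives coincide by \cref{thm:stratum-quantile}(b) and \cref{prop:strict-concavity}, so a maximizing sequence for one is a maximizing sequence for the other. The other point to check is that the limit genuinely lies in the closure $\overline\sigma$, since \cref{lem:boundary-domination} requires $W^*\in\overline\sigma$. This holds because $W^{(j)}\in\sigma$ for every $j$. If $V(\sigma)=0$ the inequality is trivial, so no degenerate case needs separate treatment.
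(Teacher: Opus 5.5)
Your proof is correct and follows the paper's argument essentially verbatim: part (a) comes from the zero case of \cref{thm:stratum-quantile}. Part (b) extracts a subsequence of a maximizing sequence converging into a child stratum via alternative (ii) of \cref{prop:boundary-descent}, then applies \cref{lem:boundary-domination} to obtain $V(\sigma)\le \Quant_\alpha^+(X_n(W^*))\le V(\tau)$. Your extra checks (maximizing for $\psi_\sigma$ versus the quantile, and $W^*\in\overline\sigma$ because child strata lie in $\partial_{\mathrm{rel}}\overline\sigma$) are valid points that the paper leaves implicit.
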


\begin{proof}
Part (a) is immediate from \cref{thm:stratum-quantile}. For part (b), let $W^{(j)}\in\sigma$ be a maximizing sequence. By \cref{prop:boundary-descent}, after passing to a subsequence one has $W^{(j)}\to W^*$ for some point $W^*$ belonging to a child stratum $\tau$ of $\sigma$. By \cref{lem:boundary-domination},
\[
\lim_{j\to\infty}\Quant_\alpha^+\bigl(X_n(W^{(j)})\bigr)
\le
\Quant_\alpha^+\bigl(X_n(W^*)\bigr)
\le
V(\tau).
\]
Taking the supremum over all child strata gives the claim.
\end{proof}

\begin{theorem}[Weak exact recursive boundary algorithm]\label{thm:weak-recursion}
Fix $(p,q,n,\alpha)$. Then the finite-horizon upper-quantile Kelly problem
\[
\sup_{W\in\overline{\mathcal W}} \Quant_\alpha^+\bigl(X_n(W)\bigr)
\]
admits a finite exact recursive solution on the stratification of \cref{lem:finite-stratification}.

More precisely:
\begin{enumerate}[label=(\alph*), itemsep=4pt]
  \item Each stratum $\sigma$ is either a zero stratum with $V(\sigma)=0$, or a nonzero stratum on which the quantile is a single monomial $W^{k_\sigma}$.
  \item On every nonzero stratum, the same-support optimization problem is a strictly concave finite-dimensional problem in ratio coordinates.
  \item If a nonzero stratum has no interior maximizer, then its value is dominated by the values of finitely many child strata.
  \item If one orders strata by the lexicographic rank
  \[
  \rho(\sigma(S,F)):=\bigl(|S|,\dim F\bigr),
  \]
  then every child stratum has strictly smaller rank. Consequently the recursive descent terminates after finitely many steps.
\end{enumerate}
In particular, processing all strata in decreasing rank yields a finite exact algorithm for the global problem. One may also view the same procedure as a top-down boundary descent that begins with the full-support chambers and passes to child strata whenever needed. In particular, the global supremum is attained.
\end{theorem}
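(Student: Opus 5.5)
The plan is to assemble the theorem almost entirely from the results already proved in \cref{sec:support-strata,sec:weak-recursion}, and to supply separately the two genuinely new ingredients: the rank decrease in (d) and attainment of the global supremum. Parts (a)--(c) follow directly. Part (a) is \cref{thm:stratum-quantile} together with \cref{cor:child-domination}(a). Part (b) is \cref{prop:strict-concavity}. Part (c) is \cref{prop:boundary-descent} combined with \cref{cor:child-domination}(b).

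For (d), take a child $\tau=\sigma(T,G)$ of $\sigma=\sigma(S,F)$. Since $\tau\subseteq\overline\sigma\subseteq\overline{\mathcal W}_S$, we have $T\subseteq S$. There are two cases.
\begin{enumerate}[label=(\roman*), itemsep=2pt]
  \item If $T\subsetneq S$, then $|T|<|S|$ and the rank drops lexicographically.
  \item If $T=S$, then $\tau$ lies in $\overline\sigma\cap\mathcal W_S^\circ$. In ratio coordinates this set is the image of $\overline F$, and $\tau$ avoids $\sigma$ itself. Hence $G$ is a face of the arrangement $\mathcal H_S$ contained in $\overline F\setminus F$. A standard fact about hyperplane arrangements is that $\overline F\setminus F$ is a union of faces of strictly smaller dimension. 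Therefore $\dim G<\dim F$.
\end{enumerate}
Because the stratification is finite (\cref{lem:finite-stratification}) and ranks strictly decrease along child relations, every descent chain is finite. This justifies processing strata in increasing rank order: we compute $V$ on the lowest-rank strata first, so that the children of each stratum are already resolved when it is reached.

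Next I would define the recursion precisely. For a zero stratum set $V(\sigma)=0$. For a nonzero stratum in alternative (i), set $V(\sigma)=(W_\sigma^*)^{k_\sigma}$. In alternative (ii), we only know $V(\sigma)\le\max_{\tau\prec\sigma}V(\tau)$, so record $\sigma$ as ``dominated'' and do not use it as a candidate. The global value is $\sup_{\overline{\mathcal W}}=\max_\sigma V(\sigma)$, because the strata cover $\overline{\mathcal W}$ and are finite in number. An induction on rank then shows that this maximum equals the maximum over zero strata and alternative-(i) strata. The inductive step: a dominated stratum's value is bounded by values of strata of strictly smaller rank, which by induction are bounded by candidate values.

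The main obstacle I expect is attainment, since a bare inequality $V(\sigma)\le\max V(\tau)$ does not by itself produce a maximizer. I would show instead that every $V(\sigma)$ is attained somewhere in $\overline{\mathcal W}$, by induction on rank.
\begin{enumerate}[label=(\roman*), itemsep=2pt]
  \item Zero strata attain $0$ at any of their points.
  \item Alternative-(i) strata attain their value at $W_\sigma^*$.
  \item For a dominated stratum, take a maximizing sequence. By \cref{prop:boundary-descent}, a subsequence converges to some $W^*$ in a child stratum. By \cref{lem:boundary-domination}, $\Quant_\alpha^+(X_n(W^*))\ge V(\sigma)$, so the value is attained at $W^*$.
\end{enumerate}
One subtlety needs checking: the limit point supplied by \cref{prop:boundary-descent} must genuinely lie in $\overline\sigma$, so that \cref{lem:boundary-domination} applies. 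This holds because the sequence lies in $\sigma$. Then the global maximum is the largest of finitely many attained values, so the global supremum is attained.
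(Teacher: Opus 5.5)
Your proposal is correct and follows the paper's proof. Parts (a)--(c) come by citing \cref{thm:stratum-quantile,prop:strict-concavity,prop:boundary-descent,cor:child-domination}, and part (d) uses the same split into a support drop $T\subsetneq S$ versus a same-support dimension drop $\dim G<\dim F$, followed by finiteness of the stratification. You are more explicit than the paper about exactness and attainment: applying \cref{lem:boundary-domination} directly to a limit point of a maximizing sequence gives attainment without descending to a terminal stratum, and your processing order (increasing rather than the stated decreasing rank) is immaterial, since your recursion never actually uses the children's values.
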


\begin{proof}
Parts (a)--(c) are \cref{thm:stratum-quantile,prop:strict-concavity,cor:child-domination}. It remains to prove the rank drop in part (d).

Let $\tau$ be a child stratum of $\sigma=\sigma(S,F)$. If $\tau$ has the same support set $S$, then by definition it lies in the relative boundary of $\overline F$ inside $\R^{S\setminus\{r\}}$. Hence its arrangement face is a proper face of $\overline F$, so
\[
\dim F_\tau<\dim F,
\qquad
|S_\tau|=|S|.
\]
Thus $\rho(\tau)<\rho(\sigma)$.

If instead $\tau$ lies on a smaller support face, then its support set satisfies $S_\tau\subsetneq S$, and therefore
\[
|S_\tau|<|S|.
\]
Again $\rho(\tau)<\rho(\sigma)$.

Because only finitely many strata exist, the lexicographically ordered rank set is finite. Therefore no infinite descent is possible. Exactness follows because the strata partition $\overline{\mathcal W}$, every nonzero stratum is handled either directly by its unique interior maximizer or indirectly by passing to child strata, and zero strata contribute nothing. The same rank induction shows that the exact value selected by the recursion is attained at some terminal descendant stratum, and therefore the global supremum over $\overline{\mathcal W}$ is attained.
\end{proof}

\begin{remark}[Complexity of the weak solver]\label{rem:complexity}
The weakness of \cref{thm:weak-recursion} is computational rather than mathematical. For a support set $S$ of size $s$, the number of relevant count vectors is
\[
M_s=\binom{n+s-1}{s-1},
\]
so the arrangement $\mathcal H_S$ has at most $\binom{M_s}{2}$ hyperplanes in $\R^{s-1}$. Standard hyperplane-arrangement bounds therefore give at most
\[
O\!\left(\binom{M_s}{2}^{\,s-1}\right)
\]
faces on that support face; see, for example, Zaslavsky \cite{Zaslavsky1975}. Summing over support sets yields a very coarse overall bound of order
\[
O\!\left(
2^m \binom{n+m-1}{m-1}^{\,2(m-1)}
\right).
\]
This is perfectly adequate for the present existential theorem, but far too crude to count as an efficient algorithm.
\end{remark}

\begin{remark}[A concise slogan]
\Cref{thm:main-structural,thm:weak-recursion} may be summarized as follows: finite-horizon quantile Kelly is \emph{stratified shadow-Kelly with finite boundary descent}. The chamber theorem identifies the active shadow law on each interior chamber, while the support-face refinement turns boundary behavior into a finite recursive problem.
\end{remark}

\begin{example}[A worked ternary boundary descent]\label{ex:ternary-main}
Let
\[
p=(0.6,0.3,0.1),
\qquad
q=\Bigl(\frac13,\frac13,\frac13\Bigr),
\]
so the one-period wealth simplex is
\[
\overline{\mathcal W}=\{W\in[0,\infty)^3: W_1+W_2+W_3=3\}.
\]
Take horizon $n=2$ and upper median level $\alpha=\tfrac12$. The six count vectors in $\mathcal C_2$, together with the corresponding monomials and probabilities, are
\[
\begin{array}{c|c|c}
k & W^k & \pi_k \\ \hline
(2,0,0) & W_1^2 & 0.36 \\
(1,1,0) & W_1W_2 & 0.36 \\
(1,0,1) & W_1W_3 & 0.12 \\
(0,2,0) & W_2^2 & 0.09 \\
(0,1,1) & W_2W_3 & 0.06 \\
(0,0,2) & W_3^2 & 0.01
\end{array}
\]
Consider the full-support stratum cut out by
\[
W_1>W_2>W_3
\qquad\text{and}\qquad
W_2^2>W_1W_3.
\]
This extra inequality selects one of the two top-dimensional strata inside the sector $W_1>W_2>W_3$; the appendix records the companion stratum $W_1W_3>W_2^2$ and shows that both yield the same active count vector. Inside this stratum the monomials are ordered
\[
W_1^2>W_1W_2>W_2^2>W_1W_3>W_2W_3>W_3^2.
\]
Thus the upper median is reached at the second term, since
\[
0.36<\tfrac12<0.36+0.36=0.72.
\]
Hence the active count vector is
\[
k_\sigma=(1,1,0),
\qquad
\Med^+\bigl(X_2(W)\bigr)=W_1W_2
\quad (W\in\sigma).
\]
The corresponding shadow-Kelly point is
\[
W^{(k_\sigma)}=\frac{k_\sigma}{2q}=(1.5,1.5,0),
\]
which does not lie in the chosen full-support stratum. By \cref{thm:weak-recursion}, the stratum optimum must therefore descend to the relative boundary.

The first descent is to the support face $W_3=0$, where $W_1+W_2=3$. On the relative interior with $W_1>W_2>0$, the positive monomials are ordered
\[
W_1^2>W_1W_2>W_2^2>0,
\]
so the upper median remains $W_1W_2$. The reduced subproblem is therefore
\[
\max\{W_1W_2: W_1>0,\ W_2>0,\ W_1+W_2=3\},
\]
whose unique maximizer on the closed support face is
\[
W_1=W_2=1.5.
\]
Thus the recursion terminates on the tie stratum
\[
W_1=W_2=1.5,
\qquad
W_3=0,
\]
where
\[
W_1^2=W_1W_2=W_2^2=2.25.
\]
Hence the exact optimal upper median along this descent is
\[
\Med^+\bigl(X_2(W)\bigr)=2.25.
\]
For comparison, the ordinary Kelly point is
\[
W^{\mathrm K}=\frac{p}{q}=(1.8,0.9,0.3),
\]
which lies in the original full-support stratum. Thus even in a very small ternary problem the exact finite-horizon median optimizer need not coincide with the asymptotic Kelly profile. A more detailed geometric analysis of this example, including the refined subdivision of the sector $W_1>W_2>W_3$, is given in \cref{app:ternary-example}.
\end{example}

\section{Asymptotic collapse to ordinary Kelly}

Define the one-period Kelly functional on the closed simplex by
\[
L(W):=
\begin{cases}
\sum_{i=1}^m p_i \log W_i, & W\in \mathcal W^{++},\\
-\infty, & W\in \overline{\mathcal W}\setminus \mathcal W^{++}.
\end{cases}
\]
Also define the scaled log-quantile by
\[
G_{n,\alpha}(W):=
\begin{cases}
\dfrac1n \log \Quant_\alpha^+\bigl(X_n(W)\bigr), & \Quant_\alpha^+\bigl(X_n(W)\bigr)>0,\\
-\infty, & \Quant_\alpha^+\bigl(X_n(W)\bigr)=0.
\end{cases}
\]
Thus the finite-horizon optimization problem may be written as
\[
\sup_{W\in\overline{\mathcal W}} G_{n,\alpha}(W).
\]

\begin{lemma}[Ordinary Kelly functional and its unique maximizer]\label{lem:kelly-functional}
The functional $L$ is upper semicontinuous on $\overline{\mathcal W}$ and has the unique maximizer
\[
W^{\mathrm K}_i=\frac{p_i}{q_i},
\qquad i=1,\dots,m.
\]
Its maximal value is
\[
L^*:=L(W^{\mathrm K})=\sum_{i=1}^m p_i \log \frac{p_i}{q_i}.
\]
Moreover, if $W\to \partial \overline{\mathcal W}$, then $L(W)\to -\infty$.
\end{lemma}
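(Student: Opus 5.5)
The lemma has three parts: upper semicontinuity, the unique maximizer, and the boundary blow-up. I would prove them in that order, reusing \cref{prop:monomial-max} wherever possible.

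\emph{Upper semicontinuity.} On $\mathcal W^{++}$, $L$ is a finite sum of continuous functions, hence continuous. At a boundary point $W^*\in\overline{\mathcal W}\setminus\mathcal W^{++}$ we have $L(W^*)=-\infty$, and upper semicontinuity there is equivalent to $\limsup_{W\to W^*}L(W)=-\infty$. That is exactly the boundary statement, so I prove it first and deduce semicontinuity.

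\emph{Boundary blow-up.} Let $W\to W^*$ with $W^*_j=0$ for some $j$. For points in $\mathcal W^{++}$, use the a priori bound $W_i\le 1/q_i$ (from $q\cdot W=1$ and positivity) to get
\[
L(W)\le p_j\log W_j+\sum_{i\ne j}p_i\log(1/q_i).
\]
Since $p_j>0$ and $W_j\to 0$, the right side tends to $-\infty$. Boundary points along the sequence already have $L=-\infty$. Hence $L(W)\to-\infty$, which gives the ``moreover'' clause and, with the previous paragraph, upper semicontinuity.

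\emph{Maximizer.} I would repeat the weighted AM--GM argument of \cref{prop:monomial-max} with weights $p_i$ in place of $k_i/n$; all weights are positive, so $S=\{1,\dots,m\}$. For $W\in\mathcal W^{++}$,
\[
\sum_i p_i\log\frac{q_iW_i}{p_i}\le\log\sum_i q_iW_i=0,
\]
with equality iff $q_iW_i/p_i$ is constant, which forces $W_i=p_i/q_i$. Alternatively, this is concavity of $\log$ (Jensen), or equivalently Gibbs' inequality $\mathrm{KL}(p\,\|\,qW)\ge 0$ after noting that $(q_iW_i)_i$ is a probability vector. The point $W^{\mathrm K}=p/q$ is feasible because $q\cdot W^{\mathrm K}=\sum_i p_i=1$, it has positive entries, and it attains $L^*=\sum_i p_i\log(p_i/q_i)$. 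Boundary points have value $-\infty<L^*$, so the maximizer is unique on all of $\overline{\mathcal W}$.

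There is no real obstacle here. The only point needing care is the blow-up estimate: the other logarithms must be bounded uniformly above, which the bound $W_i\le 1/q_i$ supplies, and $p_j>0$ must hold so the vanishing coordinate actually pulls $L$ to $-\infty$.
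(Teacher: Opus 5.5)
Your proof is correct and follows essentially the same route as the paper: the paper obtains the maximizer from the identity $L(W)=L^*-D_{\mathrm{KL}}(p\|r)$ with $r_i=q_iW_i$, which is exactly your Gibbs/Jensen step, and it gets the boundary blow-up from $0\le W_i\le 1/q_i$ together with $p_i>0$. You prove the blow-up first and deduce upper semicontinuity at boundary points from it, which is slightly more careful than the paper, since the paper calls semicontinuity immediate from the $-\infty$ convention before it has established the blow-up.
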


\begin{proof}
If $W\in \mathcal W^{++}$ and $r_i:=q_iW_i$, then $r=(r_1,\dots,r_m)$ is a probability vector because $q\cdot W=1$. Hence
\[
L(W)=\sum_{i=1}^m p_i \log \frac{r_i}{q_i}
=\sum_{i=1}^m p_i \log \frac{p_i}{q_i}-\sum_{i=1}^m p_i \log \frac{p_i}{r_i}
=L^*-D_{\mathrm{KL}}(p\|r).
\]
Therefore $L(W)\le L^*$, with equality if and only if $r=p$, equivalently $W_i=p_i/q_i$ for every $i$. This proves the maximizing statement.

Upper semicontinuity on $\overline{\mathcal W}$ is immediate from continuity on $\mathcal W^{++}$ and the convention $L=-\infty$ on the boundary. Since every $p_i>0$ and $0\le W_i\le 1/q_i$ on $\overline{\mathcal W}$, whenever $W\to \partial \overline{\mathcal W}$ at least one coordinate tends to $0$, so the corresponding term $p_i\log W_i$ tends to $-\infty$ while the remaining terms stay bounded above. Thus $L(W)\to -\infty$ at the boundary.
\end{proof}

\begin{proposition}[Uniform interior convergence of scaled log-quantiles]\label{prop:uniform-interior}
Let $K\subset \mathcal W^{++}$ be compact and fix $\alpha\in(0,1)$. Then
\[
\sup_{W\in K} \bigl|G_{n,\alpha}(W)-L(W)\bigr|\to 0
\qquad\text{as } n\to\infty.
\]
\end{proposition}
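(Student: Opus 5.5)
Write $u=\log W$. On the compact set $K\subset\mathcal W^{++}$ we have a uniform bound $\|u\|_\infty\le C_K$, because $W_i$ is bounded below by a positive constant there. The random variable $\frac1n\log X_n(W)$ equals $\frac1n N\cdot u=\widehat p_n\cdot u$, where $\widehat p_n:=N/n$. Since $t\mapsto\frac1n\log t$ is increasing, the upper $\alpha$-quantile commutes with it:
\[
G_{n,\alpha}(W)=\Quant_\alpha^+\bigl(\widehat p_n\cdot u\bigr).
\]
Here the quantile is taken for a real-valued variable, so the convention $t\ge 0$ must be replaced by $t\in\R$. This is harmless: on $K$ the quantile of $X_n(W)$ is strictly positive, since all values $W^k$ are positive. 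Also $L(W)=p\cdot u$. So it suffices to show that the upper $\alpha$-quantile of $\widehat p_n\cdot u$ is within $\varepsilon$ of $p\cdot u$ for $n$ large, uniformly over $\|u\|_\infty\le C_K$.

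The key uniform estimate is
\[
|\widehat p_n\cdot u-p\cdot u|\le C_K\|\widehat p_n-p\|_1 .
\]
Its right-hand side does not depend on $W$. Fix $\varepsilon>0$ and set $\delta:=\varepsilon/C_K$. By the weak law of large numbers (or Chebyshev applied to each coordinate, with $\operatorname{Var}(N_i/n)\le 1/(4n)$),
\[
\Prob\bigl(\|\widehat p_n-p\|_1>\delta\bigr)\to 0 .
\]
Choose $n_0$ such that this probability is less than $\min(\alpha,1-\alpha)$ for all $n\ge n_0$. On the complementary event $E_n$ we have $\Prob(E_n)>\max(\alpha,1-\alpha)$, and for every $W\in K$ the inequality $|\widehat p_n\cdot u-L(W)|\le\varepsilon$ holds.

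We then check the two quantile bounds directly from the definition of $\Quant_\alpha^+$. For the lower bound, take $t=L(W)-\varepsilon$. Then $\Prob(\widehat p_n\cdot u\ge t)\ge\Prob(E_n)>\alpha$, so $G_{n,\alpha}(W)\ge L(W)-\varepsilon$. For the upper bound, take any $t>L(W)+\varepsilon$. Then $\{\widehat p_n\cdot u\ge t\}\subseteq E_n^c$, so this event has probability less than $1-\max(\alpha,1-\alpha)\le\alpha$. Hence the supremum in the definition is at most $L(W)+\varepsilon$. Both bounds hold simultaneously for all $W\in K$, which gives
\[
\sup_K|G_{n,\alpha}-L|\le\varepsilon\qquad\text{for } n\ge n_0 .
\]

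The main obstacle is uniformity over $W$, and it is handled entirely by the Hölder-type bound above, since compactness in the open simplex bounds $\|\log W\|_\infty$. A secondary point to state carefully is the passage from $\Quant_\alpha^+$ of $X_n$ to the quantile of its scaled logarithm. This uses strict monotonicity of $\log$ and positivity of all values $W^k$ on $K$.
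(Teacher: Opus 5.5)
Your proof is correct. Its second half matches the paper's proof: you identify $G_{n,\alpha}$ with the real-valued upper quantile of $\frac1n\log X_n(W)$ using monotonicity and positivity on $K$, then sandwich that quantile at $L(W)\pm\varepsilon$ once a failure probability falls below $\min\{\alpha,1-\alpha\}$.

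The difference lies in how you obtain uniform concentration over $K$.
\begin{itemize}
\item The paper fixes $W$ and treats $A_n(W)=\frac1n\sum_t \log W_{I_t}$ as an average of i.i.d.\ variables in $[-B_K,B_K]$. It then applies Hoeffding's inequality. Uniformity holds because the tail bound $\exp(-n\varepsilon^2/(2B_K^2))$ depends on $W$ only through the constant $B_K$, so each $W$ has its own good event but with the same probability bound.
\item You separate the randomness from $W$ by writing $A_n(W)-L(W)=(\widehat p_n-p)\cdot\log W$ and applying H\"older. A single $W$-independent event $E_n=\{\|\widehat p_n-p\|_1\le \varepsilon/C_K\}$ then controls every $W\in K$ at once.
\end{itemize}

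Your route gives a slightly stronger, simultaneous statement, and it needs only Chebyshev or the weak law for the multinomial frequencies rather than an exponential inequality. Choosing $C_K>0$ to define $\delta$ also makes the paper's separate treatment of $B_K=0$ unnecessary.

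The paper's route gives an explicit exponential rate for $n_0(\varepsilon)$. It also reuses the Hoeffding machinery needed again in \cref{lem:boundary-exclusion}. It would extend verbatim to uniformly bounded i.i.d.\ log-returns without finite-state linear structure. Your argument, by contrast, relies on the identity $\log X_n(W)=N\cdot\log W$ specific to this repeated finite-outcome setting.
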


\begin{proof}
For $W\in K$, write
\[
A_n(W):=\frac1n \log X_n(W)=\frac1n\sum_{t=1}^n Y_t(W),
\]
where $Y_t(W)=\log W_{I_t}$ and $I_t$ are i.i.d. with law $p$. Then $\E[Y_t(W)]=L(W)$. Since $K$ is compact in $\mathcal W^{++}$, there exists $B_K<\infty$ such that
\[
|\log W_i|\le B_K
\qquad (W\in K,\ i=1,\dots,m).
\]
Therefore each $Y_t(W)$ takes values in $[-B_K,B_K]$, uniformly over $W\in K$. If $B_K=0$, then every $Y_t(W)$ is identically $0$ on $K$ and the conclusion is immediate. Thus we may assume $B_K>0$. Hoeffding's inequality \cite{Hoeffding1963} gives
\begin{equation}\label{eq:uniform-hoeffding}
\Prob\!\left(\bigl|A_n(W)-L(W)\bigr|\ge \varepsilon\right)
\le 2\exp\!\left(-\frac{n\varepsilon^2}{2B_K^2}\right)
\qquad (W\in K).
\end{equation}

Let
\[
\widetilde Q_{n,\alpha}(W):=\sup\{t\in\R: \Prob(A_n(W)\ge t)\ge \alpha\}.
\]
Since the exponential map is increasing,
\[
G_{n,\alpha}(W)=\widetilde Q_{n,\alpha}(W)
\qquad (W\in K).
\]
Fix $\varepsilon>0$. By \eqref{eq:uniform-hoeffding}, for every $W\in K$,
\[
\Prob\bigl(A_n(W)\ge L(W)+\varepsilon\bigr)
\le \exp\!\left(-\frac{n\varepsilon^2}{2B_K^2}\right),
\]
and
\[
\Prob\bigl(A_n(W)\ge L(W)-\varepsilon\bigr)
\ge 1-\exp\!\left(-\frac{n\varepsilon^2}{2B_K^2}\right).
\]
For all sufficiently large $n$, the exponential bound is smaller than $\min\{\alpha,1-\alpha\}$. Then uniformly in $W\in K$ one has
\[
\widetilde Q_{n,\alpha}(W)\le L(W)+\varepsilon
\quad\text{and}\quad
\widetilde Q_{n,\alpha}(W)\ge L(W)-\varepsilon.
\]
Hence
\[
\sup_{W\in K} \bigl|G_{n,\alpha}(W)-L(W)\bigr|\le \varepsilon
\]
for all large $n$, which proves the claim.
\end{proof}

\begin{lemma}[Uniform exclusion of the boundary region]\label{lem:boundary-exclusion}
Let
\[
M:=\max_{1\le i\le m} \log \frac1{q_i}.
\]
For $\eta>0$, define
\[
K_\eta:=\{W\in \overline{\mathcal W}: W_i\ge \eta \text{ for all } i\}
\]
and
\[
B(\eta):=\max_{1\le i\le m}\left[\Bigl(1-\frac{p_i}{2}\Bigr)M+\frac{p_i}{2}\log \eta\right].
\]
Then $B(\eta)\to -\infty$ as $\eta\downarrow 0$, and for every fixed $\eta>0$ there exists $n_0(\eta,\alpha)$ such that
\[
G_{n,\alpha}(W)\le B(\eta)
\qquad\text{for every } W\in \overline{\mathcal W}\setminus K_\eta
\]
whenever $n\ge n_0(\eta,\alpha)$.
\end{lemma}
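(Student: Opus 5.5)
First, $B(\eta)\to-\infty$ as $\eta\downarrow0$ is immediate. Each bracket is $(1-p_i/2)M+(p_i/2)\log\eta$ with $p_i>0$, so each tends to $-\infty$. A maximum of finitely many such terms does as well.

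For the main bound, fix $W\in\overline{\mathcal W}\setminus K_\eta$. Then some coordinate satisfies $W_i<\eta$, and every coordinate satisfies $W_j\le 1/q_j\le e^M$. If $W_i=0$ we first handle the degenerate case. For large $n$ we have $\Prob(N_i=0)=(1-p_i)^n<\alpha$, so by \cref{thm:stratum-quantile}(a), or directly, the quantile is $0$ and $G_{n,\alpha}(W)=-\infty$. If $0<W_i<\eta$, then on every outcome
\[
\tfrac1n\log X_n(W)=\sum_j \tfrac{N_j}{n}\log W_j\le \Bigl(1-\tfrac{N_i}{n}\Bigr)M+\tfrac{N_i}{n}\log\eta .
\]
This uses $\log W_j\le M$ for $j\ne i$ and $\log W_i<\log\eta$. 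We may assume $\eta<1$ and $M\ge\log\eta$; this holds automatically for small $\eta$, and the claim only matters there. Under that assumption the right-hand side is nonincreasing in $N_i/n$. Hence on the event $\{N_i/n\ge p_i/2\}$ we get $\tfrac1n\log X_n(W)\le (1-p_i/2)M+(p_i/2)\log\eta\le B(\eta)$.

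It remains to show that the complementary event is rare, uniformly in $W$. Indeed $\{N_i/n<p_i/2\}$ does not depend on $W$ at all. By Hoeffding, $\Prob(N_i/n<p_i/2)\le \exp(-np_i^2/2)$. Choose $n_0(\eta,\alpha)$ so that $\max_i\exp(-np_i^2/2)<\alpha$ and $\max_i(1-p_i)^n<\alpha$ for $n\ge n_0$; this $n_0$ in fact depends only on $p$ and $\alpha$. Then for every threshold $t>e^{nB(\eta)}$,
\[
\Prob\bigl(X_n(W)\ge t\bigr)\le \Prob(N_i/n<p_i/2)<\alpha .
\]
By the definition of the upper quantile this gives $\Quant^+_\alpha(X_n(W))\le e^{nB(\eta)}$, that is, $G_{n,\alpha}(W)\le B(\eta)$.

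The main point needing care is the monotonicity step, which requires $\log\eta\le M$. If $M<\log\eta$ (large $\eta$), I would instead note that then all terms are bounded by $M$ and the trivial bound is weaker. The cleanest fix is to state the lemma for $\eta\le e^{M}$, or to replace $M$ by $\max(M,0)$ and take $\eta<1$, which is harmless since the lemma is only used as $\eta\downarrow0$. Apart from this, uniformity in $W$ comes for free because the bad event concerns only the counts.
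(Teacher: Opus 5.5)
Your proof is correct and follows the paper's argument: the same good event $\{N_i/n\ge p_i/2\}$, the same convex-combination bound using $\log W_j\le M$, and the same Hoeffding estimate $\exp(-np_i^2/2)<\alpha$, with the vanishing-coordinate case handled equivalently. The monotonicity caveat you raise, which the paper passes over silently, needs no restriction on $\eta$: when $\log\eta>M$ every bracket in $B(\eta)$ exceeds $M$, so $\tfrac1n\log X_n(W)\le M<B(\eta)$ pathwise and the bound holds trivially for all $\eta>0$.
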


\begin{proof}
The limit $B(\eta)\to -\infty$ is immediate because every $p_i>0$ and $\log \eta\to -\infty$. Fix $\eta>0$ and $W\notin K_\eta$. Then $W_i\le \eta$ for at least one index $i$. If some coordinate of $W$ vanishes, we interpret $A_n(W)=n^{-1}\log X_n(W)$ pathwise, so $A_n(W)=-\infty$ whenever an outcome with zero wealth occurs at least once; this can only strengthen the desired upper bound. Thus it suffices to prove the estimate in the case the displayed logarithmic sum is finite, and on that event one has
\[
A_n(W)=\sum_{j=1}^m \frac{N_j}{n}\log W_j.
\]
On the event
\[
E_i:=\left\{\frac{N_i}{n}\ge \frac{p_i}{2}\right\},
\]
one has
\[
A_n(W)
\le \frac{N_i}{n}\log \eta+\Bigl(1-\frac{N_i}{n}\Bigr)M
\le \frac{p_i}{2}\log \eta+\Bigl(1-\frac{p_i}{2}\Bigr)M
\le B(\eta).
\]
Here we used $W_j\le 1/q_j$ for every $j$, hence $\log W_j\le M$ whenever the logarithm is finite.

Now $N_i\sim \mathrm{Binomial}(n,p_i)$, so Hoeffding's inequality yields
\[
\Prob(E_i^c)
=\Prob\!\left(\frac{N_i}{n}<\frac{p_i}{2}\right)
\le \exp\!\left(-\frac{n p_i^2}{2}\right)
\le \exp\!\left(-\frac{n p_*^2}{2}\right),
\]
where $p_*:=\min_i p_i>0$. For all sufficiently large $n$, this bound is smaller than $\alpha$. Therefore
\[
\Prob\bigl(A_n(W)>B(\eta)\bigr)<\alpha
\]
for every $W\notin K_\eta$. By the definition of the upper $\alpha$-quantile, this implies
\[
G_{n,\alpha}(W)=\widetilde Q_{n,\alpha}(W)\le B(\eta).
\]
This is uniform over $W\in \overline{\mathcal W}\setminus K_\eta$.
\end{proof}

\begin{theorem}[Asymptotic value convergence and collapse of exact maximizers]
\label{thm:asymptotic-collapse}
Fix $\alpha\in(0,1)$. Then
\begin{enumerate}[label=(\alph*), itemsep=4pt]
  \item The optimal scaled log-quantile converges to the ordinary Kelly value:
  \[
  \sup_{W\in\overline{\mathcal W}} G_{n,\alpha}(W)\to L^*=L(W^{\mathrm K}).
  \]
  Equivalently,
  \[
  \frac1n \log \sup_{W\in\overline{\mathcal W}} \Quant_\alpha^+\bigl(X_n(W)\bigr)
  \to \sum_{i=1}^m p_i \log \frac{p_i}{q_i}.
  \]
  \item If $W_n^{(\alpha)}$ is any exact maximizer of the finite-horizon upper-quantile problem \eqref{eq:master-problem}, then
  \[
  W_n^{(\alpha)}\to W^{\mathrm K}.
  \]
\end{enumerate}
\end{theorem}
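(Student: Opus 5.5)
The plan is a standard $\Gamma$-convergence-style sandwich argument built from \cref{prop:uniform-interior}, \cref{lem:boundary-exclusion} and \cref{lem:kelly-functional}.

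For the lower bound in (a), note that $W^{\mathrm K}\in\mathcal W^{++}$, so the singleton $K=\{W^{\mathrm K}\}$ is compact. \Cref{prop:uniform-interior} then gives $G_{n,\alpha}(W^{\mathrm K})\to L^*$, and hence
\[
\liminf_n \sup_W G_{n,\alpha}(W)\ge L^*.
\]

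For the upper bound, fix $\varepsilon>0$.

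\begin{enumerate}[label=(\roman*), itemsep=2pt]
  \item By \cref{lem:boundary-exclusion}, choose $\eta>0$ so small that $B(\eta)\le L^*-1$. For $n\ge n_0(\eta,\alpha)$ this gives $G_{n,\alpha}\le L^*-1$ on $\overline{\mathcal W}\setminus K_\eta$.
  \item Since $K_\eta$ is compact in $\mathcal W^{++}$, \cref{prop:uniform-interior} gives $G_{n,\alpha}\le L+\varepsilon\le L^*+\varepsilon$ on $K_\eta$ for all large $n$.
\end{enumerate}

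Together these yield $\limsup_n\sup_W G_{n,\alpha}\le L^*+\varepsilon$. Since $\varepsilon$ is arbitrary, (a) follows. The equivalent formulation in (a) holds because $\log$ is increasing, and the supremum is positive: the quantile at $W^{\mathrm K}$ is positive, since $W^{\mathrm K}$ has full support.

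For (b), existence of exact maximizers is guaranteed by \cref{thm:weak-recursion}. Argue by contradiction. Suppose some subsequence $W_{n_j}^{(\alpha)}$ stays at distance at least $\delta>0$ from $W^{\mathrm K}$. Let $U_\delta$ be the open $\delta$-ball around $W^{\mathrm K}$ intersected with $\overline{\mathcal W}$, so $W^{\mathrm K}\notin\overline{\mathcal W}\setminus U_\delta$. By compactness of $\overline{\mathcal W}\setminus U_\delta$, upper semicontinuity of $L$, and uniqueness of the maximizer (\cref{lem:kelly-functional}), we get
\[
\gamma:=L^*-\sup_{\overline{\mathcal W}\setminus U_\delta}L>0.
\]
Apply the same two-region split with $\eta$ chosen so that $B(\eta)\le L^*-\gamma$:

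\begin{enumerate}[label=(\roman*), itemsep=2pt]
  \item Outside $K_\eta$, we have $G_{n,\alpha}\le L^*-\gamma$ for large $n$.
  \item On the compact set $K_\eta\setminus U_\delta$, uniform convergence gives $G_{n,\alpha}\le L+\gamma/2\le L^*-\gamma/2$ for large $n$.
\end{enumerate}

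Hence $G_{n_j,\alpha}(W_{n_j}^{(\alpha)})\le L^*-\gamma/2$ eventually. But these points are maximizers, so their values equal $\sup_W G_{n_j,\alpha}\to L^*$ by (a), which is a contradiction.

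The main obstacle is the uniform positive gap $\gamma$ away from $W^{\mathrm K}$, because $L$ takes the value $-\infty$ on the boundary. Upper semicontinuity handles this: an u.s.c. function on a compact set attains its supremum. That supremum is finite, because there are interior points in the set (otherwise it is $-\infty$ and trivially below $L^*$), and it is strictly less than $L^*$ by uniqueness of the maximizer. The remaining point to check is that the thresholds $n_0$ from the two lemmas depend only on $\eta$, $\delta$ and $\alpha$ and not on $W$, and this uniformity is already part of their statements.
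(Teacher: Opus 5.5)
Your proposal is correct and is essentially the paper's argument: split $\overline{\mathcal W}$ into the compact interior region $K_\eta$, where \cref{prop:uniform-interior} gives uniform convergence of $G_{n,\alpha}$ to $L$, and its complement, where \cref{lem:boundary-exclusion} pushes $G_{n,\alpha}$ uniformly below $L^*$, with (b) following from a positive gap of $L$ away from $W^{\mathrm K}$. Your ordering in (b) is actually slightly cleaner than the paper's: you fix the gap $\gamma$ over all of $\overline{\mathcal W}\setminus U_\delta$ via upper semicontinuity \emph{before} choosing $\eta$, whereas the paper takes the gap on $K_\eta\setminus U$ and then shrinks $\eta$, which enlarges $K_\eta$; you only need to replace $\gamma$ by $\min\{\gamma,1\}$ to cover the degenerate case $\gamma=+\infty$.
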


\begin{proof}
Let $\varepsilon>0$. Choose $\eta>0$ so small that $W^{\mathrm K}\in K_\eta$ and
\begin{equation}\label{eq:eta-choice-value}
B(\eta)<L^*-2\varepsilon.
\end{equation}
By \cref{prop:uniform-interior},
\begin{equation}\label{eq:uniform-K-eta}
\sup_{W\in K_\eta} |G_{n,\alpha}(W)-L(W)|<\varepsilon
\end{equation}
for all sufficiently large $n$. By \cref{lem:boundary-exclusion},
\begin{equation}\label{eq:boundary-upper}
\sup_{W\notin K_\eta} G_{n,\alpha}(W)\le B(\eta)<L^*-2\varepsilon
\end{equation}
for all sufficiently large $n$. Since $W^{\mathrm K}\in K_\eta$, \eqref{eq:uniform-K-eta} gives
\[
G_{n,\alpha}(W^{\mathrm K})>L^*-\varepsilon.
\]
Combining this with \eqref{eq:boundary-upper}, we see that for all large $n$ the global supremum is attained inside $K_\eta$, and
\[
L^*-\varepsilon
\le \sup_{W\in\overline{\mathcal W}} G_{n,\alpha}(W)
= \sup_{W\in K_\eta} G_{n,\alpha}(W)
\le \sup_{W\in K_\eta} L(W)+\varepsilon
\le L^*+\varepsilon.
\]
This proves part (a).

For part (b), let $U$ be any open neighborhood of $W^{\mathrm K}$. Choose $\eta>0$ so that $W^{\mathrm K}\in K_\eta$. Since $K_\eta\setminus U$ is compact and does not contain the unique maximizer $W^{\mathrm K}$ of $L$, there exists $\delta>0$ such that
\begin{equation}\label{eq:gap-away-U}
\sup_{W\in K_\eta\setminus U} L(W)\le L^*-3\delta.
\end{equation}
After possibly shrinking $\eta$ further, we may also assume
\begin{equation}\label{eq:boundary-gap-U}
W^{\mathrm K}\in K_\eta
\quad\text{and}\quad
B(\eta)<L^*-3\delta.
\end{equation}
For all sufficiently large $n$, \cref{prop:uniform-interior,lem:boundary-exclusion} imply
\[
\sup_{W\in K_\eta\setminus U} G_{n,\alpha}(W)<L^*-2\delta
\quad\text{and}\quad
\sup_{W\notin K_\eta} G_{n,\alpha}(W)<L^*-3\delta,
\]
while
\[
G_{n,\alpha}(W^{\mathrm K})>L^*-\delta.
\]
Therefore every exact maximizer $W_n^{(\alpha)}$ lies in $U$ for all large $n$. Since $U$ was arbitrary, $W_n^{(\alpha)}\to W^{\mathrm K}$.
\end{proof}

\begin{corollary}[Asymptotically optimal sequences collapse to Kelly]\label{cor:asymptotically-optimal}
Fix $\alpha\in(0,1)$, and let $W_n\in\overline{\mathcal W}$ be any sequence such that
\[
G_{n,\alpha}(W_n)=\sup_{W\in\overline{\mathcal W}} G_{n,\alpha}(W)+o(1).
\]
Then $W_n\to W^{\mathrm K}$. In particular, ordinary Kelly is asymptotically optimal for every fixed upper quantile:
\[
G_{n,\alpha}(W^{\mathrm K})=\sup_{W\in\overline{\mathcal W}} G_{n,\alpha}(W)+o(1).
\]
\end{corollary}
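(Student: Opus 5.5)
The corollary should follow by reusing the estimates from the proof of \cref{thm:asymptotic-collapse}, applied to near-maximizers instead of exact maximizers. Write $S_n:=\sup_{W\in\overline{\mathcal W}} G_{n,\alpha}(W)$. By \cref{thm:asymptotic-collapse}(a), $S_n\to L^*$. The hypothesis on $W_n$ therefore gives $G_{n,\alpha}(W_n)\to L^*$. So it suffices to show that any sequence with $G_{n,\alpha}(W_n)\to L^*$ converges to $W^{\mathrm K}$.

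For the convergence claim, fix an open neighborhood $U$ of $W^{\mathrm K}$. Choose $\eta>0$ and $\delta>0$ exactly as in part (b) of the proof of \cref{thm:asymptotic-collapse}, so that \eqref{eq:gap-away-U} and \eqref{eq:boundary-gap-U} hold. For all large $n$, \cref{prop:uniform-interior} and \cref{lem:boundary-exclusion} give
\[
\sup_{W\in K_\eta\setminus U} G_{n,\alpha}(W)<L^*-2\delta,
\qquad
\sup_{W\notin K_\eta} G_{n,\alpha}(W)<L^*-3\delta.
\]
Hence every $W\notin U$ satisfies $G_{n,\alpha}(W)<L^*-2\delta$. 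Since $G_{n,\alpha}(W_n)>L^*-\delta$ for large $n$, we get $W_n\in U$ eventually. As $U$ was arbitrary, $W_n\to W^{\mathrm K}$. This is the step that needs the most care, but it is essentially a verbatim reuse of the uniform gap argument, now with an $o(1)$ slack absorbed by $\delta$. The only point to check is that this slack is eventually smaller than $\delta$, which follows from $G_{n,\alpha}(W_n)\to L^*$.

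For the final assertion, note that $W^{\mathrm K}\in K_\eta$ for some fixed $\eta>0$, since every $p_i>0$. \Cref{prop:uniform-interior} applied to the compact set $\{W^{\mathrm K}\}$ gives $G_{n,\alpha}(W^{\mathrm K})\to L(W^{\mathrm K})=L^*$. Combining this with $S_n\to L^*$ yields $G_{n,\alpha}(W^{\mathrm K})=S_n+o(1)$, which is the asymptotic optimality of ordinary Kelly. No step here should pose a genuine obstacle: the corollary is a repackaging of the uniform interior convergence and the uniform boundary exclusion, both of which are already stated above.
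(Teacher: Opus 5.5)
Your proposal is correct and is essentially the paper's proof. The final assertion comes from \cref{thm:asymptotic-collapse}(a) together with pointwise convergence at $W^{\mathrm K}$ from \cref{prop:uniform-interior}, and the convergence claim comes from the same uniform gap bound $\sup_{W\notin U}G_{n,\alpha}(W)\le L^*-2\delta$ used in part (b); the only cosmetic difference is that you first deduce $G_{n,\alpha}(W_n)\to L^*$, whereas the paper compares directly with $\sup_{W\in\overline{\mathcal W}}G_{n,\alpha}(W)-\delta$. One small point you inherit from part (b): shrinking $\eta$ enlarges $K_\eta\setminus U$, so to secure \eqref{eq:gap-away-U} and \eqref{eq:boundary-gap-U} together, fix $\delta$ from $\sup_{\overline{\mathcal W}\setminus U}L<L^*$ before shrinking $\eta$ (this supremum is attained, by upper semicontinuity of $L$ on a compact set, at a point other than $W^{\mathrm K}$).
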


\begin{proof}
The final statement follows immediately from \cref{thm:asymptotic-collapse}(a) and the pointwise convergence at $W^{\mathrm K}$ furnished by \cref{prop:uniform-interior}.

For the convergence claim, let $U$ be any open neighborhood of $W^{\mathrm K}$. Choose $\eta>0$ and $\delta>0$ so that \eqref{eq:gap-away-U} and \eqref{eq:boundary-gap-U} hold. Then for all sufficiently large $n$,
\[
\sup_{W\notin U} G_{n,\alpha}(W)\le L^*-2\delta
\]
by the same argument used in the proof of \cref{thm:asymptotic-collapse}(b), while \cref{thm:asymptotic-collapse}(a) gives
\[
\sup_{W\in\overline{\mathcal W}} G_{n,\alpha}(W)\ge L^*-\delta.
\]
Hence
\[
\sup_{W\notin U} G_{n,\alpha}(W)
\le \sup_{W\in\overline{\mathcal W}} G_{n,\alpha}(W)-\delta
\]
for all large $n$. This contradicts the assumed asymptotic optimality unless $W_n\in U$ eventually. Since $U$ was arbitrary, $W_n\to W^{\mathrm K}$.

\end{proof}

\begin{remark}[First-order asymptotic scope]
The asymptotic results above are intentionally first-order. They identify the limiting optimal growth exponent and the collapse of near-optimal wealth profiles to the ordinary Kelly point, but they do not attempt any local central-limit, Berry--Esseen, or $n^{-1/2}$-scale optimizer expansion. Those genuinely stronger questions lie beyond the scope of the present exact finite-horizon note.
\end{remark}

\begin{remark}[Toward higher-order finite-horizon asymptotics]
A natural next step is to supplement \cref{thm:asymptotic-collapse} with a local correction theory describing how the exact finite-horizon optimizer and value deviate from ordinary Kelly at large but finite $n$. In the parlay setting, \cite{Long2026Parlay} develops explicit perturbative asymptotics in wealth-profile coordinates, including the Whitrow regime. That viewpoint suggests that the present multinomial quantile problem should also admit a higher-order expansion once the local chamber/stratum geometry near $W^{\mathrm K}$ is understood.
\end{remark}

\section{Further remarks and conjectures}

\begin{remark}[All upper quantiles, not only the median]
The exact chamber theorem, the weak recursion theorem, and the first-order asymptotic collapse theorem are more general than the median: they hold verbatim for every fixed upper quantile level $\alpha\in(0,1)$. The only change is the active quantile index on each chamber or stratum.
\end{remark}

\begin{remark}[Simultaneous wagers with a fixed finite joint state space]
Suppose one period consists of any fixed finite joint state space $\Omega$, and suppose a one-period decision determines a feasible wealth family $\mathcal W_{\mathrm{feas}}\subseteq\overline{\mathcal W}$ on that state space. Then \cref{prop:restricted-family} applies without change. In particular, repeated finite-horizon quantile optimization for simultaneous wagers is exactly the restriction of the same chamber decomposition to the feasible one-period wealth family. This is especially natural when simultaneous wager menus are further restricted by drawdown-style admissibility conditions: in that case the local subproblems are again risk-constrained shadow-Kelly problems in the sense of \cite{Long2026Risk}, now embedded inside the finite-horizon chamber/stratum decomposition.
\end{remark}

\begin{remark}[Where the theorem uses repeated identical events]
The multinomial reduction is exact because the same one-period event, with the same outcome law and the same one-period wealth profile, is repeated independently through time. If the event changes from period to period, then the exact finite-horizon problem lives on the full path space rather than a single multinomial count simplex. The same geometric idea survives, but the clean count-vector formulation is replaced by a path-space chamber decomposition.
\end{remark}

\begin{remark}[Framework interpretation]
The note is a direct application of the same framework machine developed in \cite{Long2026Single,Long2026Simultaneous,Long2026Parlay,Long2026Risk}.
\begin{enumerate}[label=(\roman*), itemsep=2pt]
  \item \textbf{Wealth profile.} The primitive variable is the one-period state-contingent wealth profile $W$.
  \item \textbf{Arrow--Debreu pricing.} Feasibility is expressed by $q\cdot W=1$.
  \item \textbf{Shadow Kelly.} Once a chamber or stratum count vector $k$ is fixed, the local finite-horizon problem is a one-period Kelly problem for the shadow empirical law $k/n$.
  \item \textbf{Finite boundary descent.} When a local shadow-Kelly optimizer leaves its stratum, the residual problem passes to lower-rank strata of the same finite geometry.
\end{enumerate}
This is why the finite-horizon theorem belongs to the same theory as the one-period Kelly, simultaneous-decoupling, parlay-factorization, and risk-constrained support-geometry results developed in \cite{Long2026Single,Long2026Simultaneous,Long2026Parlay,Long2026Risk}, rather than standing as an isolated result.
\end{remark}

\begin{conjecture}[Strong pruning for exact computation]\label{conj:strong-pruning}
There is a sharper exact recursive algorithm which, given $(p,q,n,\alpha)$, traverses only a properly pruned subset of boundary strata singled out by local violation data, rather than the full weak stratified recursion of \cref{thm:weak-recursion}. In particular, exact computation should be possible without anything close to exhaustive exploration of descendant strata.
\end{conjecture}

\begin{conjecture}[Higher-order finite-horizon correction to Kelly]\label{conj:higher-order}
Fix $\alpha\in(0,1)$ and suppose the local chamber/stratum geometry is nondegenerate near $W^{\mathrm K}$. Then the exact finite-horizon maximizers $W_n^{(\alpha)}$ and the optimal values $\sup_{W\in\overline{\mathcal W}} G_{n,\alpha}(W)$ admit a higher-order asymptotic expansion about ordinary Kelly, with a leading correction determined by the local multinomial quantile geometry. In particular, one expects a finite-$n$ correction law for $W_n^{(\alpha)}-W^{\mathrm K}$, analogous in spirit to the perturbative Whitrow-type expansions obtained for parlays in \cite{Long2026Parlay}.
\end{conjecture}

\begin{remark}[Dual viewpoint]
The weak recursion theorem leaves open where efficient pruning truly lives. A natural next step is a dual reformulation in terms of Bregman or entropic projection, where one might hope to identify the correct boundary branches directly rather than by weak finite descent.
\end{remark}

\begin{remark}[What has been proved and what remains open]
The exact chamber theorem, the exact shadow-Kelly reduction, the weak exact recursive boundary algorithm, and the first-order asymptotic collapse to ordinary Kelly are fully proved above. The stronger pruning problem remains open here.
\end{remark}

\section*{Acknowledgment of scope}
The purpose of this note is to isolate the exact repeated multi-outcome finite-horizon theorem in a self-contained form. It is not intended as a complete treatment of dynamic or path-dependent menus, nor as a full higher-order asymptotic theory. Rather, it establishes the exact structural bridge from finite-horizon quantiles to chamberwise shadow-Kelly optimization, upgrades that bridge to a weak exact recursive solver on the full closed simplex, and proves the natural first-order asymptotic collapse back to ordinary Kelly.

\appendix

\section{Expanded binary example}\label{app:binary-example}
We revisit \cref{ex:binary-main} in a slightly more systematic form. The data are
\[
p=(0.6,0.4),\qquad q=(1,1),\qquad n=3,
\]
so $\overline{\mathcal W}=\{(U,D)\in[0,\infty)^2:U+D=1\}$. The possible count vectors, monomials, and probabilities are
\[
\begin{array}{c|c|c}
k & W^k & \pi_k \\ \hline
(3,0) & U^3 & 0.216 \\
(2,1) & U^2D & 0.432 \\
(1,2) & UD^2 & 0.288 \\
(0,3) & D^3 & 0.064
\end{array}
\]
There are exactly two open chambers, $\Gamma_+=\{U>D\}$ and $\Gamma_-=\{U<D\}$, separated by the wall $U=D$.

On $\Gamma_+$ the monomials are ordered
\[
U^3>U^2D>UD^2>D^3,
\]
whereas on $\Gamma_-$ they are ordered
\[
D^3>UD^2>U^2D>U^3.
\]
For the upper median, both chambers produce the same active count vector $(2,1)$ because
\[
0.216<\tfrac12<0.216+0.432=0.648
\]
in the first ordering and
\[
0.064+0.288=0.352<\tfrac12<0.352+0.432=0.784
\]
in the second. Thus
\[
\Med^+\bigl(X_3(U,D)\bigr)=U^2D
\]
on both open chambers. The associated shadow law is $\widehat p=(2/3,1/3)$, and the corresponding shadow-Kelly point is
\[
\Bigl(\frac23,\frac13\Bigr).
\]
This point lies in $\Gamma_+$ and solves the $\Gamma_+$ subproblem exactly, while on $\Gamma_-$ it lies outside the chamber and forces the chamber optimum to the boundary wall. This yields the two chamber values recorded in \cref{ex:binary-main}.

It is also instructive to look briefly at a higher quantile level, say $\alpha=0.8$. On $\Gamma_+$ one finds
\[
0.216+0.432=0.648<0.8<0.936=0.216+0.432+0.288,
\]
so the active count vector is $(1,2)$ and the local quantile is $UD^2$. Its shadow-Kelly point is $(1/3,2/3)$, which lies outside $\Gamma_+$, so the chamber optimum again collapses to the wall $U=D$. On $\Gamma_-$ one has
\[
0.064+0.288+0.432=0.784<0.8<1,
\]
so the active count vector is $(3,0)$ and the local quantile is $U^3$. Its shadow-Kelly point is $(1,0)$, which also lies outside $\Gamma_-$. Thus at $\alpha=0.8$ both chambers are boundary-driven, and the global optimum is attained on the wall $U=D=1/2$ with value $1/8$. This small calculation illustrates that even in the binary case the active monomial can depend strongly on the quantile level.

\section{Expanded ternary example}\label{app:ternary-example}
We now expand \cref{ex:ternary-main}. The data are
\[
p=(0.6,0.3,0.1),
\qquad
q=\Bigl(\frac13,\frac13,\frac13\Bigr),
\qquad
n=2,
\qquad
\alpha=\tfrac12,
\]
so $\overline{\mathcal W}=\{W\in[0,\infty)^3:W_1+W_2+W_3=3\}$. The six count vectors, monomials, and probabilities are
\[
\begin{array}{c|c|c}
k & W^k & \pi_k \\ \hline
(2,0,0) & W_1^2 & 0.36 \\
(1,1,0) & W_1W_2 & 0.36 \\
(1,0,1) & W_1W_3 & 0.12 \\
(0,2,0) & W_2^2 & 0.09 \\
(0,1,1) & W_2W_3 & 0.06 \\
(0,0,2) & W_3^2 & 0.01
\end{array}
\]

The sector $W_1>W_2>W_3$ is already subdivided by the extra tie hyperplane
\[
W_2^2=W_1W_3.
\]
Indeed, inside this sector there are two top-dimensional strata:
\begin{align*}
\sigma_A &: W_1>W_2>W_3,\quad W_2^2>W_1W_3,\\
\sigma_B &: W_1>W_2>W_3,\quad W_1W_3>W_2^2.
\end{align*}
On these two strata the monomial orderings are
\[
W_1^2>W_1W_2>W_2^2>W_1W_3>W_2W_3>W_3^2
\qquad (W\in\sigma_A),
\]
and
\[
W_1^2>W_1W_2>W_1W_3>W_2^2>W_2W_3>W_3^2
\qquad (W\in\sigma_B).
\]
In both cases the first two probabilities are $0.36$ and $0.36$, so the upper median is already reached at the second term. Therefore the active count vector on both $\sigma_A$ and $\sigma_B$ is
\[
k_\sigma=(1,1,0),
\qquad
\Med^+\bigl(X_2(W)\bigr)=W_1W_2.
\]
The shadow-Kelly point is the same on both strata:
\[
W^{(k_\sigma)}=(1.5,1.5,0).
\]
This point lies on neither full-support stratum, so both descend to the support face $W_3=0$.

On the open support face $W_3=0$ with $W_1>W_2>0$, the only positive monomials are
\[
W_1^2>W_1W_2>W_2^2>0.
\]
Their cumulative probabilities are
\[
0.36,
\qquad
0.72,
\qquad
0.81,
\]
so the upper median again equals $W_1W_2$. The reduced problem is the one-variable concave maximization
\[
\max\{W_1W_2: W_1+W_2=3,\ W_1,W_2>0\},
\]
whose optimizer on the closed face is the tie point
\[
W_1=W_2=1.5,
\qquad
W_3=0.
\]
At this point,
\[
W_1^2=W_1W_2=W_2^2=2.25,
\]
and the total mass of the tied positive values is
\[
0.36+0.36+0.09=0.81>\tfrac12.
\]
Hence the upper median on the terminal tie stratum is exactly $2.25$.

This example also exhibits the zero-stratum phenomenon from \cref{thm:stratum-quantile}. On the one-point support face
\[
W_1=3,
\qquad
W_2=W_3=0,
\]
the only positive terminal wealth occurs for count vector $(2,0,0)$, whose probability is $0.36<\tfrac12$. Therefore the upper median on that face is exactly $0$. Similar remarks apply to the one-point faces concentrated on outcomes $2$ and $3$.

Finally, the ordinary Kelly point is
\[
W^{\mathrm K}=\frac{p}{q}=(1.8,0.9,0.3).
\]
It lies in the stratum $\sigma_A$, since
\[
1.8>0.9>0.3
\qquad\text{and}\qquad
0.9^2=0.81>0.54=(1.8)(0.3).
\]
Thus the finite-horizon median optimizer produced by the recursive descent is visibly different from the asymptotic Kelly point even though both live naturally inside the same simplex geometry. The example therefore illustrates, in the smallest genuinely multi-outcome setting, the exact chamber logic, the support-face refinement, the boundary recursion, and the first-order collapse theme of the main text.

\end{document}